\newtheorem{theorem}{Theorem}
\newtheorem{corollary}{Corollary}
\newtheorem{lemma}{Lemma}
\newtheorem{remark}{Remark}
\newenvironment{proof}[1][Proof]{\noindent\textbf{#1.} }{\ \rule{0.5em}{0.5em}}
\begin{document}

\title{\textbf{Pointwise strong and very strong\ approximation by Fourier
series of integrable functions}}
\author{\textbf{W\l odzimierz \L enski } \\
%EndAName
University of Zielona G\'{o}ra\\
Faculty of Mathematics, Computer Science and Econometrics\\
65-516 Zielona G\'{o}ra, ul. Szafrana 4a\\
P O L A N D\\
W.Lenski@wmie.uz.zgora.pl }
\date{}
\maketitle

\begin{abstract}
We will present an estimation of the $H_{k_{0},k_{r}}^{q}f$ $\ $and $%
H_{u}^{\lambda \varphi }f$ means as a approximation versions \ of \ the
Totik type generalization$\left( \text{see \cite{11, 12}}\right) $ \ of the
results of \ J. Marcinkiewicz and A. Zygmund in \cite{JM, ZA}. As a measure
of such approximations we will use the function constructed on the base of
definition of the Gabisonia points \cite{1}. Some results on the norm
approximation will also given.

\ \ \ \ \ \ \ \ \ \ \ \ \ \ \ \ \ \ \ \ 

\textbf{Key words: }Pointwise\textbf{\ }approximation, Strong and very
strong approximation

\ \ \ \ \ \ \ \ \ \ \ \ \ \ \ \ \ \ \ 

\textbf{2000 Mathematics Subject Classification: }42A24,
\end{abstract}

\footnotetext[1]{%
Key words: Strong approximation, rate of pointwise strong summability}

\footnotetext[2]{%
2000 Mathematics Subject Classification:42A24}

\newpage

\section{Introduction}

Let $L^{p}\ (1<p<\infty )\;[resp.C]$ be the class of all $2\pi $--periodic
real--valued functions integrable in the Lebesgue sense with $p$--th power [%
\text{continuous]} over $Q=$ $[-\pi ,\pi ]$ and let $X=X^{p}$ where $%
X^{p}=L^{p}$ when $1<p<\infty $ or $X^{p}=C$ when $p=\infty $. Let us define
the norm of $f\in X^{p}$ as\ \ \ \ 
\begin{equation*}
\Vert f\Vert _{_{X^{p}}}=\Vert f(x)\Vert _{_{X^{p}}}=\left\{ 
\begin{array}{ll}
\left( \int_{_{_{Q}}}\mid f(x)\mid ^{p}dx\right) ^{1/p} & when\text{ }%
1<p<\infty \ , \\ 
\sup_{x\in Q}\mid f(x)\mid & \text{ }when\text{ \ }p=\infty .%
\end{array}%
\right. \ \ \ \ 
\end{equation*}

Consider the trigonometric Fourier series\ 
\begin{equation*}
Sf(x)=\frac{a_{o}(f)}{2}+\sum_{k=1}^{\infty }(a_{k}(f)\cos kx+b_{k}(f)\sin
kx)
\end{equation*}%
and denote by $S_{k}f$ \ the partial sums of $Sf$. Then, 
\begin{equation*}
H_{k_{0},k_{r}}^{q}\left( x\right) :=\left\{ \frac{1}{r+1}\sum_{\nu
=0}^{r}\left\vert S_{k_{\nu }}f\left( x\right) -f\left( x\right) \right\vert
^{q}\right\} ^{\frac{1}{q}},\text{ \ \ \ \ \ \ \ \ \ }\left( q>0\right) .
\end{equation*}%
where \ \ $0\leq k_{0}<k_{1}<k_{2}<...<k_{r}$ \ $\left( \geq r\right) ,$ \
and%
\begin{equation*}
H_{u}^{\lambda \varphi }f\left( x\right) :=\left\{ \sum_{\nu =0}^{\infty
}\lambda _{\nu }\left( u\right) \varphi \left( \left\vert S_{\nu }f\left(
x\right) -f\left( x\right) \right\vert \right) \right\} ,\text{ \ \ \ \ }
\end{equation*}%
where \ $\left( \lambda _{\nu }\right) $ \ is a sequence of positive
functions defined on the set having at least one limit point and a function
\ $\varphi :[0,\infty )\rightarrow \mathbf{R.}$

As a measure of the above deviations we will use the pointwise
characteristic, constructed on the base of definition of the Gabisonia
points $(G_{p,s}-points)$, introduced in \cite{1} as follows%
\begin{equation*}
G_{x}f\left( \delta \right) _{p,s}:=\left\{ \sum_{k=1}^{\left[ \pi /\delta %
\right] }\left( \frac{1}{k\delta }\int_{\left( k-1\right) \delta }^{k\delta
}\left\vert \varphi _{x}\left( t\right) \right\vert ^{p}dt\right)
^{s/p}\right\} ^{1/s}
\end{equation*}%
\begin{equation*}
G_{x}^{\circ }f\left( \gamma \right) _{p,s}:=\sup_{0<\delta \leq \gamma
}\left\{ \sum_{k=1}^{\left[ \pi /\delta \right] }\left( \frac{1}{k\delta }%
\int_{\left( k-1\right) \delta }^{k\delta }\left\vert \varphi _{x}\left(
t\right) \right\vert ^{p}dt\right) ^{s/p}\right\} ^{1/s}
\end{equation*}%
and, constructed on the base of definition of the Lebesgue points $%
(L^{p}-points),$ defined as usually%
\begin{eqnarray*}
w_{x}f(\delta )_{p} &:&=\left\{ \frac{1}{\delta }\int_{0}^{\delta
}\left\vert \varphi _{x}\left( t\right) \right\vert ^{p}dt\right\} ^{1/p}, \\
\text{where \ \ }\varphi _{x}\left( t\right) &:&=f\left( x+t\right) +f\left(
x-t\right) -2f\left( x\right) .
\end{eqnarray*}%
\ \ \ We can observe that, for any \ $s>0$ and $p\in \lbrack 1,\infty )$ 
\begin{equation*}
w_{x}f(\delta )_{p}\leq G_{x}f\left( \delta \right) _{p,s}\text{ ,}
\end{equation*}%
for $p\in \lbrack 1,\infty )$\ and $f\in C$%
\begin{equation*}
w_{x}f(\delta )_{p}\leq \omega _{C}f\left( \delta \right) .
\end{equation*}%
By the Minkowski inequality, with \ $\widetilde{p}\geq s>p\geq 1$ for $f\in
X^{\widetilde{p}}$, 
\begin{equation*}
\Vert G_{\cdot }f\left( \delta \right) _{p,s}\Vert _{_{X^{\widetilde{p}%
}}}\leq \omega _{_{X^{\widetilde{p}}}}f\left( \frac{\left\vert \log \left[
\pi /\delta \right] \right\vert }{\left( \pi /\delta \right) ^{\frac{1}{p}-%
\frac{1}{s}}}\right) \text{ \ \ }\left( \text{cf. \cite{1}}\right)
\end{equation*}%
and%
\begin{equation*}
\Vert w_{\cdot .}f(\delta )_{p}\Vert _{_{X^{\widetilde{p}}}}\leq \omega
_{_{X^{\widetilde{p}}}}f\left( \delta \right) ,
\end{equation*}%
where $\omega _{X}f$ is the modulus of continuity of $f$ in the space $X=X^{%
\widetilde{p}}$ defined by the formula 
\begin{equation*}
\omega _{X}f\left( \delta \right) :=\sup_{0<\left\vert h\right\vert \leq
\delta }\left\Vert \varphi _{\cdot }\left( h\right) \right\Vert _{X}\text{ \
.}
\end{equation*}

It is well-known that $H_{0,r}^{q}f\left( x\right) -$ means tend to $0$ $%
\left( \text{as}\ r\rightarrow \infty \right) $ \ at the $L^{p}-points$ $x$
\ of $f$ $\in L^{p}$ $\left( 1<p\leq \infty \right) $ and at the $%
G_{1,s}-points$ $x$ \ of $f$ $\in L^{1}$ $\left( s>1\right) .$ These facts
were proved as a generalization of the Fej\'{e}r classical result on the
convergence of the $\left( C,1\right) $ -means of Fourier series by G. H.
Hardy, J. E. Littlewood in \cite{2}.and by O. D. Gabisonia in \cite{1}. In
case \ $L^{1}$ and convergence almost everywhere the first results on this
area belong to J. Marcinkiewicz \cite{JM}\ and A. Zygmund \cite{ZA}. The
estimates of $H_{0,r}^{q}f\left( x\right) $ -mean were obtained in \cite{3,
4, 10}. Here we present estimations of the $H_{k_{0},k_{r}}^{q}\left(
x\right) $ and $H_{\upsilon }^{\lambda \varphi }f\left( x\right) $ means as
approximation versions \ of \ the Totik type $\left( \text{see \cite{11, 12}}%
\right) $ \ generalization of the result of \ O. D. Gabisonia \cite{1}. We
also give some corollaries on norm approximation.

By $K$ we shall designate either an absolute constant or a constant
depending on the some parameters, not necessarily the same of each
occurrence. We shall write $I_{1}\ll I_{2}$ if there exists a positive
constant $K$ , sometimes depended on some parameters, such that $I_{1}\leq
KI_{2}$.\medskip

\section{Statement of the results}

Let us consider a function $w_{x}$ of modulus of continuity type on the
interval $[0,+\infty ),$ i.e. a nondecreasing continuous function having the
following properties: \ $w_{x}\left( 0\right) =0,$ \ $w_{x}\left( \delta
_{1}+\delta _{2}\right) \leq w_{x}\left( \delta _{1}\right) +w_{x}\left(
\delta _{2}\right) $ \ for any \ \ $0\leq \delta _{1}\leq \delta _{2}\leq
\delta _{1}+\delta _{2}$ \ and let 
\begin{equation*}
L^{p}\left( w_{x}\right) _{s}=\left\{ f\in L^{p}:\text{ }G_{x}f\left( \delta
\right) _{p,s}\leq w_{x}\left( \delta \right) \text{ \ , \ where \ }\delta
>0,\text{ \ }s>p\geq 1\right\} .
\end{equation*}

In the same way let 
\begin{equation*}
X^{p}\left( \omega \right) _{s}=\left\{ f\in X^{p}:\left\Vert G_{\cdot
}^{\circ }f\left( \delta \right) _{1,s}\right\Vert _{X^{p}}\leq \omega
\left( \delta \right) ,\text{ \ with a modulus of continuity \ }\omega
\right\}
\end{equation*}%
We start with theorems:

\begin{theorem}
If \textit{\ }$f\in L^{1}\left( w_{x}\right) _{s}$ \textit{\ and }\ $0\leq
k_{0}<k_{1}<k_{2}<...<k_{r}$ \ $\left( \geq r\right) $ ,\ \textit{then} 
\begin{equation*}
H_{k_{0},k_{r}}^{q^{\prime }}\left( x\right) \ll w_{x}\left( \frac{\pi }{%
k_{0}+1}\right) \log \frac{k_{r}+1}{r+1/2}\text{ \ ,}
\end{equation*}%
where \ $0<q^{\prime }\leq q$ $\ \left( \geq 2\right) \ $such $\ $that$\ \ 
\frac{1\text{ }}{s}+\frac{1}{q}=1$ $.$
\end{theorem}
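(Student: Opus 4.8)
The plan is to reduce to the exponent $q$, bound the two regimes "near $0$" and "away from $0$" of the Dirichlet integral separately, and extract the logarithm from the oscillatory far regime. First, since $0<q'\le q$, the monotonicity of power means gives $\left(\frac1{r+1}\sum_{\nu}|a_\nu|^{q'}\right)^{1/q'}\le\left(\frac1{r+1}\sum_{\nu}|a_\nu|^{q}\right)^{1/q}$, so it suffices to estimate $H_{k_0,k_r}^{q}(x)$. I write each deviation as $S_{k_\nu}f(x)-f(x)=\frac1\pi\int_0^\pi\varphi_x(t)D_{k_\nu}(t)\,dt$ with $D_{k_\nu}(t)=\frac{\sin((k_\nu+\frac12)t)}{2\sin(t/2)}$, and split each integral at $t=\pi/(k_\nu+1)$ into a near part $A_\nu$ and a far part $Q_\nu$.

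For the near parts I use $|D_{k_\nu}(t)|\le k_\nu+\tfrac12$ on $[0,\pi/(k_\nu+1)]$, so that $|A_\nu|\ll (k_\nu+1)\int_0^{\pi/(k_\nu+1)}|\varphi_x|=\pi\,w_xf(\pi/(k_\nu+1))_1$. Because the first ($k=1$) term of the Gabisonia sum dominates this average, $w_xf(\pi/(k_\nu+1))_1\le G_xf(\pi/(k_\nu+1))_{1,s}\le w_x(\pi/(k_\nu+1))\le w_x(\pi/(k_0+1))$, using $k_\nu\ge k_0$ and monotonicity of $w_x$. Hence the near parts contribute at most a constant times $w_x(\pi/(k_0+1))$ to $H_{k_0,k_r}^{q}(x)$, and since the index condition keeps $\log\frac{k_r+1}{r+1/2}$ above a fixed positive constant, this term is already absorbed by the right-hand side.

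The far parts $Q_\nu$ are the heart of the matter, and here one must not pass to absolute values inside the integral: doing so discards the oscillation of $\sin((k_\nu+\frac12)t)$ and overestimates $H_{k_0,k_r}^{q}(x)$ by a positive power of $r$. Instead I combine over $\nu$ by duality, exploiting $\frac1s+\frac1q=1$: one has $\left(\sum_\nu|Q_\nu|^q\right)^{1/q}=\sup_{\Vert c\Vert_s\le1}\left|\frac1\pi\int_0^\pi\varphi_x(t)\sum_\nu c_\nu D_{k_\nu}(t)\,\mathbf{1}_{[\pi/(k_\nu+1),\pi]}(t)\,dt\right|$. I then decompose $[0,\pi]$ into the Gabisonia intervals $((k-1)\delta,k\delta]$ at scale $\delta=\pi/(k_0+1)$, pair $\int_{I_k}\varphi_x\big(\sum_\nu c_\nu D_{k_\nu}\big)$ with the local mean $\frac1{k\delta}\int_{I_k}|\varphi_x|$, and apply Hölder with the conjugate exponents $s$ and $q$. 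The $\ell^s$ factor is exactly $G_xf(\delta)_{1,s}\le w_x(\pi/(k_0+1))$, and what remains is an $\ell^q$ norm of the combined kernel $\sum_\nu c_\nu D_{k_\nu}$ weighted by $k\delta$.

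The main obstacle is bounding this kernel factor by $(r+1)^{1/q}\log\frac{k_r+1}{r+1/2}$ uniformly over $\Vert c\Vert_s\le1$, so that the normalisation $(r+1)^{-1/q}$ leaves precisely the logarithm; this is exactly where $q\ge2$ and the distinctness $k_0<\dots<k_r$ of the frequencies enter. The cancellation in $\sum_\nu c_\nu\sin((k_\nu+\frac12)t)$, which on a full period is controlled of order $\Vert c\Vert_s$ by a Hausdorff–Young/orthogonality estimate, must be transferred to the local suprema over the intervals $I_k$ without reverting to the lossy absolute value. The logarithm should then appear as the count of the effective frequency scales: with $r+1$ distinct frequencies spread through a range of size $\sim k_r$, the number of scales $t\in(\pi/(k_r+1),\pi)$ carrying a non-cancelling contribution to the weighted kernel norm is $\sim\log\frac{k_r+1}{r+1/2}$. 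Making this transfer quantitative and uniform over all admissible sequences $(k_\nu)$ is the step I expect to demand the most care.
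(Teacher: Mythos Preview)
Your plan for the far part has a genuine gap, and the mechanism you propose cannot close it. After duality and the Gabisonia decomposition at scale $\delta=\pi/(k_0+1)$ you need $\bigl|\int_{I_k}\varphi_x K\bigr|\le a_k b_k$ with $a_k=\frac1{k\delta}\int_{I_k}|\varphi_x|$ and $K=\sum_\nu c_\nu D_{k_\nu}$. But $\varphi_x$ is an arbitrary $L^1$ function on $I_k$ subject only to the value of $a_k$, so the only $b_k$ that works for every admissible $\varphi_x$ is $b_k\ge k\delta\,\|K\|_{L^\infty(I_k)}$; this forces a local supremum of $K$ and discards precisely the oscillation you warned against losing. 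The Hausdorff--Young estimate bounds $\|K\|_{L^q}$, not $\|K\|_{L^\infty(I_k)}$, and there is no way to trade one for the other against bare $L^1$ data. In short, the Gabisonia $\ell^s$ structure records only local $L^1$ averages of $\varphi_x$; these cannot detect cancellation of $K$ \emph{within} a single $I_k$, so the cancellation has to be produced on the $\varphi_x$ side, not on the kernel side. (A separate slip: $\log\frac{k_r+1}{r+1/2}$ is \emph{not} bounded below by a fixed positive constant---take $k_\nu=\nu$---so the near contribution $w_x(\delta_0)$ is not automatically absorbed; the paper's computations in fact carry a ``$1+$'' throughout.)

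The paper proceeds quite differently and never combines over $\nu$ by duality. For each $\nu$ it applies Totik's shift trick: with $\delta_\nu=\pi/(k_\nu+\tfrac12)$ one has $\sin\bigl((k_\nu+\tfrac12)(t-\delta_\nu)\bigr)=-\sin\bigl((k_\nu+\tfrac12)t\bigr)$, so writing $2\int\varphi_x D_{k_\nu}=\int\bigl[\varphi_x(t)-\varphi_x(t-\delta_\nu)\bigr]D_{k_\nu}(t)\,dt+\int\varphi_x(t-\delta_\nu)\bigl[D_{k_\nu}(t)+D_{k_\nu}(t-\delta_\nu)\bigr]\,dt$ converts the oscillation of the kernel into the difference $\varphi_x(t)-\varphi_x(t-\delta_\nu)$ (handled by Lemma~3) plus a remainder whose kernel gains an extra factor $\delta_\nu/t^2$. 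The integral is split at $2\delta_\nu$ and $2\gamma_r^2/\delta_\nu$ (with $\gamma_r=\pi/(r+\tfrac12)$), and after integration by parts the middle range yields $\int_{2\delta_\nu}^{2\gamma_r^2/\delta_\nu}\frac{dt}{t}\ll \log\frac{k_r+1}{r+1/2}$; Lemmas~1, 4 and~6 dispose of the boundary and far pieces. All of this is done termwise in $\nu$, and no Hausdorff--Young or $\ell^s$--$\ell^q$ duality enters.
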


\begin{theorem}
\textit{If }$f\in X^{p}$\textit{\ \ } \textit{and }\ $0\leq
k_{0}<k_{1}<k_{2}<...<k_{r}$ \ $\left( \geq r\right) $ ,\ \textit{then} 
\begin{equation*}
\left\Vert H_{k_{0},k_{r}}^{q^{\prime }}f\left( \cdot \right) \right\Vert
_{X^{p}}\ll \omega \left( \frac{\pi }{k_{0}+1}\right) \log \frac{k_{r}+1}{%
r+1/2}\text{ \ ,}
\end{equation*}%
where \ $0<q^{\prime }\leq q$ $\ \left( \geq 2\right) \ $such $\ $that$\ \ 
\frac{1\text{ }}{s}+\frac{1}{q}=1$ $.$
\end{theorem}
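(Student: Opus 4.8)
The plan is to reduce Theorem 2 to the pointwise machinery behind Theorem 1 and then pass to the $X^{p}$ norm. First, since $0<q^{\prime}\leq q$ and the normalized power mean $\left\{ \frac{1}{r+1}\sum_{\nu=0}^{r}|\cdot|^{q}\right\}^{1/q}$ is nondecreasing in the exponent, one has $H_{k_{0},k_{r}}^{q^{\prime}}f(x)\leq H_{k_{0},k_{r}}^{q}f(x)$ for every $x$; hence it suffices to estimate $\left\Vert H_{k_{0},k_{r}}^{q}f\right\Vert _{X^{p}}$ in the conjugate case $\frac{1}{s}+\frac{1}{q}=1$, $q\geq 2$, $s\in(1,2]$.

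The heart of the argument is the pointwise inequality
\begin{equation*}
H_{k_{0},k_{r}}^{q}f(x)\ll G_{x}^{\circ}f\!\left(\frac{\pi}{k_{0}+1}\right)_{1,s}\log\frac{k_{r}+1}{r+1/2},
\end{equation*}
which I would establish exactly by the method behind Theorem 1. Writing $S_{k_{\nu}}f(x)-f(x)=\frac{1}{\pi}\int_{0}^{\pi}\varphi_{x}(t)D_{k_{\nu}}(t)\,dt$ with the Dirichlet kernel $D_{n}(t)=\frac{\sin((n+1/2)t)}{2\sin(t/2)}$, and invoking $\ell^{q}$--$\ell^{s}$ duality, I would linearize the strong mean as $\sum_{\nu}g_{\nu}\,\varepsilon_{\nu}\bigl(S_{k_{\nu}}f(x)-f(x)\bigr)$ with $\sum_{\nu}g_{\nu}^{s}\leq 1$ and $\varepsilon_{\nu}=\pm 1$, so that everything is controlled by the single integral $\frac{1}{\pi}\int_{0}^{\pi}\varphi_{x}(t)\sum_{\nu}g_{\nu}\varepsilon_{\nu}D_{k_{\nu}}(t)\,dt$. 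Splitting $(0,\pi]$ into the initial interval $\left(0,\pi/(k_{0}+1)\right)$ and the Gabisonia blocks $\left[(k-1)\delta,k\delta\right]$ on dyadic scales $\delta_{j}$ ranging between $\pi/(k_{r}+1)$ and $\pi/(k_{0}+1)$, estimating the kernel sum on each block and applying H\"{o}lder's inequality produces, on each scale, the characteristic $G_{x}f(\delta_{j})_{1,s}$. Since there are $\ll\log\frac{k_{r}+1}{r+1/2}$ such scales and each obeys $G_{x}f(\delta_{j})_{1,s}\leq G_{x}^{\circ}f\!\left(\frac{\pi}{k_{0}+1}\right)_{1,s}$ by monotonicity of the supremum defining $G_{x}^{\circ}$, summation yields the displayed bound. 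This step---the kernel estimate together with the correct bookkeeping of the scales, which in Theorem 1 was the place where the majorant $w_{x}$ entered---is the main obstacle; here $G_{x}^{\circ}$ takes over the role previously played by the modulus-type majorant $w_{x}$, which is precisely why the supremum version of the characteristic is needed.

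With the pointwise inequality in hand the conclusion is immediate. The factor $\log\frac{k_{r}+1}{r+1/2}$ is independent of the spatial variable, so taking the $X^{p}$ norm gives
\begin{equation*}
\left\Vert H_{k_{0},k_{r}}^{q}f\right\Vert _{X^{p}}\ll\left\Vert G_{\cdot}^{\circ}f\!\left(\frac{\pi}{k_{0}+1}\right)_{1,s}\right\Vert _{X^{p}}\log\frac{k_{r}+1}{r+1/2}.
\end{equation*}
Finally, by the definition of the class $X^{p}(\omega)_{s}$ one has $\left\Vert G_{\cdot}^{\circ}f(\delta)_{1,s}\right\Vert _{X^{p}}\leq\omega(\delta)$; specializing $\delta=\pi/(k_{0}+1)$ and combining with the reduction $H^{q^{\prime}}\leq H^{q}$ from the first step completes the proof.
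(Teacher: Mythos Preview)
Your route is genuinely different from the paper's. The paper does \emph{not} first prove a pointwise inequality in terms of $G_{x}^{\circ}$ and then take the $X^{p}$ norm; instead it reruns the proof of Theorem~1 line by line and, at every place where the pointwise Lemmas~3,~4,~6 were invoked, it takes the $X^{p}$ norm immediately and applies the norm Lemmas~5 and~7. Because Lemmas~5 and~7 produce $\omega_{X^{p}}f$ directly, the paper's argument never passes through $G_{x}^{\circ}$ or the class $X^{p}(\omega)_{s}$ at all; the $\omega$ in the conclusion is the ordinary modulus of continuity of $f$ in $X^{p}$. Your final step, by contrast, needs the extra membership $f\in X^{p}(\omega)_{s}$ to bound $\bigl\Vert G_{\cdot}^{\circ}f(\delta)_{1,s}\bigr\Vert_{X^{p}}$, which is a strictly stronger hypothesis than the bare $f\in X^{p}$ stated.

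There is also a gap in your sketch of the pointwise bound. The description via $\ell^{q}$--$\ell^{s}$ duality and a dyadic block decomposition is not the mechanism of Theorem~1: that proof splits each integral at the two cut points $2\delta_{\nu}=\tfrac{2\pi}{k_{\nu}+1/2}$ and $2\gamma=\tfrac{2\gamma_{r}^{2}}{\delta_{\nu}}$ with $\gamma_{r}=\tfrac{\pi}{r+1/2}$, and then integrates by parts on the middle and tail pieces. The logarithm $\log\tfrac{k_{r}+1}{r+1/2}$ arises precisely from the integrals $\int_{2\delta_{\nu}}^{2\gamma}\tfrac{dt}{t}$ and $\int_{2\gamma}^{\pi}\tfrac{dt}{t}$, i.e.\ from the intermediate scale $\gamma_{r}$; it is \emph{not} a count of dyadic scales between $\pi/(k_{r}+1)$ and $\pi/(k_{0}+1)$, which would give $\log\tfrac{k_{r}+1}{k_{0}+1}$ instead. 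So as written your argument does not explain the correct dependence on $r$, and the index $\nu_{0}$ determined by $k_{\nu_{0}-1}<r\le k_{\nu_{0}}$, which is essential in the paper's treatment of $B_{r}$, plays no role in your outline. If you want to keep the pointwise-then-norm strategy, you would have to redo the Theorem~1 computation with $w_{x}$ replaced by $G_{x}^{\circ}f(\cdot)_{1,s}$ (monotone, and satisfying the doubling $G_{x}^{\circ}f(2\delta)_{1,s}\ll G_{x}^{\circ}f(\delta)_{1,s}$ by Lemma~2), rather than the duality sketch you gave.
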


Denoting%
\begin{equation*}
\Phi =\left\{ 
\begin{array}{c}
\varphi :\varphi \left( 0\right) =0,\text{ \ }\varphi \nearrow ,\text{ \ }%
\varphi \left( 2u\right) \ll \varphi \left( u\right) \text{ \ for }u\in
\left( 0,1\right) \\ 
\text{and \ }\log \varphi \left( u\right) =O\left( u\right) \text{ \ as \ }%
u\rightarrow \infty%
\end{array}%
\right\}
\end{equation*}%
we can formulate the next theorems on the base of the before two.

\begin{theorem}
\textit{If }$f\in L^{1}$ \textit{\ , }$\varphi \in \Phi $ \ \textit{and }\ $%
\lambda _{\nu }\left( m\right) =\frac{1}{N_{m}+1}$ $\ $for $\ \nu $ $%
=N_{m-2}+1,N_{m-2}+2,...,N_{m}$ \ and \ $\lambda _{\nu }\left( m\right) =0$
\ otherwise, \ \textit{then}%
\begin{equation*}
H_{m}^{\lambda \varphi }f\left( x\right) \ll \varphi \left( w_{x}\left( 
\frac{\pi }{N_{m-2}+1}\right) \right) \text{ \ ,}
\end{equation*}%
where $m=1.2,..$ and \ $s>1.$
\end{theorem}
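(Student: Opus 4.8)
The plan is to read off $H_{m}^{\lambda \varphi }f$ from its definition and then reduce everything to the strong estimate of Theorem~1. Inserting the prescribed weights $\lambda _{\nu }(m)$ gives
\[
H_{m}^{\lambda \varphi }f(x)=\frac{1}{N_{m}+1}\sum_{\nu =N_{m-2}+1}^{N_{m}}\varphi \left( a_{\nu }\right) ,\qquad a_{\nu }:=\left| S_{\nu }f(x)-f(x)\right| ,
\]
so that the active block $\nu =N_{m-2}+1,\dots ,N_{m}$ is exactly a system of consecutive indices $k_{0}=N_{m-2}+1<\dots <k_{r}=N_{m}$ with $r=N_{m}-N_{m-2}-1$. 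First I would apply Theorem~1 to this block (using the membership $f\in L^{1}(w_{x})_{s}$ underlying the notation) with the exponent $q$ furnished by $\tfrac1s+\tfrac1q=1$, the relevant range being $q\ge 2$. Since $w_{x}(\pi /(k_{0}+1))\le w_{x}(\pi /(N_{m-2}+1))=:A$ and, for a sequence $N_{m}$ of geometric growth, the factor $\log \frac{N_{m}+1}{N_{m}-N_{m-2}-1/2}$ stays bounded, this produces the strong bound $\sum_{\nu }a_{\nu }^{q}\ll (N_{m}+1)A^{q}$.

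From this the idea is to pass from $L^{q}$ control of the deviations to control of their $\varphi $-average. Markov's inequality turns the previous estimate into the counting bound $\#\{\nu :a_{\nu }>tA\}\ll (N_{m}+1)t^{-q}$ for every $t>0$, and I would split the sum according to the size of $a_{\nu }$. The indices with $a_{\nu }\le A$ contribute at most $(N_{m}-N_{m-2})\varphi (A)\le (N_{m}+1)\varphi (A)$, since $\varphi $ is nondecreasing, which is already of the required order. For the remaining indices I would decompose the range dyadically, $2^{j}A<a_{\nu }\le 2^{j+1}A$; on each level the doubling property $\varphi (2u)\le \kappa \varphi (u)$ $\left( u\in (0,1)\right) $ yields $\varphi (a_{\nu })\ll \kappa ^{j}\varphi (A)$, while the counting bound limits the number of such indices to $(N_{m}+1)2^{-jq}$, so the level-$j$ contribution is $\ll (N_{m}+1)\varphi (A)(\kappa 2^{-q})^{j}$. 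Summing the geometric series and dividing by $N_{m}+1$ would return the announced bound $\ll \varphi (A)=\varphi \left( w_{x}(\pi /(N_{m-2}+1))\right) $.

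The delicate point — and the place where both defining properties of $\Phi $ enter — is the range of large deviations. The dyadic step can be iterated only while $2^{j}A$ stays below $1$, where the doubling estimate is available, and the geometric series converges only when $q$ outweighs the doubling constant of $\varphi $, which is why one takes the exponent as large as the membership $f\in L^{1}(w_{x})_{s}$ permits. For the few indices with $a_{\nu }$ bounded away from $0$ the doubling estimate no longer applies, and there I would invoke the second property $\log \varphi (u)=O(u)$ together with the single-sum estimate $a_{\nu }\ll A\log N_{m}$ that underlies Theorem~1: the counting bound makes the number of such indices of order $(N_{m}+1)A^{q}$, which is so small that even the crude exponential bound $\varphi (a_{\nu })\le e^{c a_{\nu }}$ keeps their total contribution below $\varphi (A)$. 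Balancing the (at most) exponential growth of $\varphi $ against the polynomial decay of the deviation count supplied by Theorem~1 is the main obstacle, and it is precisely the two-sided nature of the $\Phi $-conditions that makes this balance close.
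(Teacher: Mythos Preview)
Your plan has a genuine gap. Applying Theorem~1 once to the full block and then using Markov's inequality yields only the \emph{polynomial} counting bound $\#\{\nu:a_\nu>tA\}\ll(N_m+1)t^{-q}$, and this is too weak for the class $\Phi$. The exponent $q$ is pinned down by $\tfrac1s+\tfrac1q=1$; you cannot enlarge it at will, whereas the doubling constant $\kappa$ of $\varphi$ can be arbitrarily large. Hence your dyadic series $\sum_j(\kappa\,2^{-q})^j$ need not converge: already $\varphi(u)=u^{10}\in\Phi$ with $s=q=2$ gives ratio $2^{8}$. The tail estimate fails for the same reason: with count $\ll(N_m+1)A^q$ for the indices with $a_\nu\ge1$ and the crude bound $\varphi(a_\nu)\le e^{cA\log N_m}=N_m^{cA}$, the contribution after dividing by $N_m+1$ is $A^qN_m^{cA}$, which for $\varphi(u)=u^{10}$, $q=2$ would have to be $\ll A^{10}$ --- impossible. (A minor side issue: no growth condition on $(N_m)$ is assumed, so your appeal to ``geometric growth'' is unjustified. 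The bound $\sum_\nu a_\nu^q\ll(N_m+1)A^q$ is nevertheless true, but because $R\bigl(\log(N/R)\bigr)^q\ll_q N$ for all $1\le R\le N$, not for the reason you give.)

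The missing idea --- and the whole point of the logarithmic factor in Theorem~1 --- is to apply Theorem~1 not to the full block but to each level set $\Delta_\mu=\{\nu\in[N_{m-2}+1,N_m]:a_\nu\ge\mu A\}$. Writing $\Delta_\mu=\{k_0<\dots<k_r\}$, the left side of Theorem~1 is at least $\mu A$ while the right side is at most $K\,A\log\dfrac{N_m+1}{r+1/2}$; cancelling $A$ and exponentiating gives the \emph{exponential} bound $|\Delta_\mu|\ll N_m e^{-\mu/K}$. Then
\[
H_m^{\lambda\varphi}f(x)\ \ll\ \frac{1}{N_m+1}\sum_{\mu\ge1}|\Delta_{\mu-1}|\,\varphi(\mu A)\ \ll\ \sum_{\mu\ge1}e^{-\mu/K}\varphi(\mu A),
\]
and now the exponential decay in $\mu$ dominates both the polynomial growth $\varphi(\mu A)\ll\mu^{\log_2\kappa}\varphi(A)$ supplied by the doubling condition for $\mu A\le1$ and the bound $\varphi(\mu A)\le e^{c\mu A}$ for large arguments (here $A=w_x(\pi/(N_{m-2}+1))$ is small). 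This is the route the paper takes, following Totik.
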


\begin{theorem}
\textit{If }$f\in X^{p}$\textit{\ , }$\varphi \in \Phi $ \ \textit{and }\ $%
\lambda _{\nu }\left( m\right) =\frac{1}{N_{m}+1}$ $\ $for $\ \nu $ $%
=N_{m-2}+1,N_{m-2}+2,...,N_{m}.$and \ $\lambda _{\nu }\left( m\right) =0$ \
otherwise, \ \textit{then}%
\begin{equation*}
\left\Vert H_{m}^{\lambda \varphi }f\left( \cdot \right) \right\Vert
_{X^{p}}\ll \varphi \left( \omega \left( \frac{\pi }{N_{m-2}+1}\right)
\right) \text{ \ ,}
\end{equation*}%
where $m=1.2,..$ and \ $s>1.$
\end{theorem}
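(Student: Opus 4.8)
The plan is to obtain Theorem 4 from the strong norm estimate of Theorem 2 in exactly the way Theorem 3 is obtained from Theorem 1, replacing the pointwise majorant $w_x$ by the modulus $\omega$ and the pointwise averages by $X^p$-norms. Fix $m$ and write $\beta_\nu(x):=|S_\nu f(x)-f(x)|$, let $B_m:=\{N_{m-2}+1,\dots,N_m\}$ be the block carrying the weights, $b:=|B_m|=N_m-N_{m-2}$, and $\Omega:=\omega(\pi/(N_{m-2}+1))$. Since $\lambda_\nu(m)=1/(N_m+1)$ on $B_m$ and vanishes off it,
\[
H_m^{\lambda\varphi}f(x)=\frac{1}{N_m+1}\sum_{\nu\in B_m}\varphi(\beta_\nu(x))\le\frac{1}{b}\sum_{\nu\in B_m}\varphi(\beta_\nu(x)),
\]
because $b/(N_m+1)\le1$; so it suffices to control the genuine block-average on the right.

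The only input from harmonic analysis is Theorem 2. Applying it to the consecutive-integer selection $k_\nu=N_{m-2}+1+\nu$ (so $k_0=N_{m-2}+1$, $k_r=N_m$, $r+1=b$) and to every exponent $q'\in(0,q]$, with $1/s+1/q=1$, gives
\[
\Big\|\Big(\tfrac1b\sum_{\nu\in B_m}\beta_\nu^{\,q'}\Big)^{1/q'}\Big\|_{X^p}\ll\omega\Big(\tfrac{\pi}{N_{m-2}+2}\Big)\log\frac{N_m+1}{b-1/2}\ll\Omega,
\]
where the logarithmic factor is $O(1)$ under the (tacit) growth of $(N_m)$ keeping $b$ comparable with $N_m$, and the shift of the argument of $\omega$ is harmless by monotonicity. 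I would first fix $q$ (equivalently $s>1$) large enough that $q$ exceeds the doubling exponent $\rho:=\log_2 K_\varphi$ arising from $\varphi(2u)\ll\varphi(u)$; this is legitimate since the hypotheses are available for every $s>1$.

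The heart of the argument is a purely numerical comparison, carried out for each fixed $x$: writing $M(x):=(\tfrac1b\sum_{\nu\in B_m}\beta_\nu(x)^{q})^{1/q}$,
\[
\frac1b\sum_{\nu\in B_m}\varphi(\beta_\nu(x))\ll\varphi\big(M(x)\big).
\]
To prove this I would split $B_m$ by the size of $\beta_\nu$ relative to $M(x)$. On $\{\beta_\nu\le M\}$ monotonicity gives $\varphi(\beta_\nu)\le\varphi(M)$. On the dyadic shells $\{2^{j-1}M<\beta_\nu\le2^jM\}$ lying below $1$, the doubling property yields $\varphi(\beta_\nu)\ll\varphi(M)(\beta_\nu/M)^{\rho}$, and since $\rho\le q$ the power-mean inequality $\tfrac1b\sum\beta_\nu^{\rho}\le M^{\rho}$ collapses their total contribution to $\ll\varphi(M)$. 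The remaining indices with $\beta_\nu>1$ are the crux: there doubling is useless, so one invokes $\log\varphi(u)=O(u)$, i.e. $\varphi(u)\ll e^{cu}$, expands $e^{c\beta_\nu}=\sum_k(c\beta_\nu)^k/k!$, and estimates $\tfrac1b\sum_\nu\beta_\nu^{k}$ through the whole family of strong means from the previous step, the factorial decay being what offsets the growth in $k$ of the implied constants and of the shell heights.

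Finally I would take $X^p$-norms of the comparison. For $X^\infty=C$ the supremum commutes with the nondecreasing $\varphi$, so $\|H_m^{\lambda\varphi}f\|_C\ll\varphi(\|M\|_C)\ll\varphi(\Omega)$ at once. For $L^p$ the genuine obstacle is that $\varphi$ is nonlinear and cannot simply be pulled through the integral; I would instead repeat the dyadic-shell decomposition in the variable $x$, bounding the measure of $\{2^{j-1}\Omega<M\le2^j\Omega\}$ by Chebyshev against $\|M\|_{X^p}\ll\Omega$ and the heights by doubling, arranging the exponent bookkeeping (again through the freedom in $s$, and if necessary through the strong estimate of Theorem 2 in a higher $L^r$ permitted by $f\in X^p$) so that the resulting geometric series converges to $\ll\varphi(\Omega)^{p}$. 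I expect the entire difficulty to sit in this last step together with the large-value regime $\beta_\nu>1$ above — both governed by the growth restriction $\log\varphi(u)=O(u)$ and by the availability of Theorem 2 across the full admissible range of exponents.
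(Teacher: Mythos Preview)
Your plan diverges from the paper's at the very first step, and the divergence is fatal. You apply Theorem~2 once, to the \emph{consecutive} block $k_\nu=N_{m-2}+1+\nu$, and then discard the logarithm as $O(1)$. But in the paper's derivation of Theorem~3 from Theorem~1 (which Theorem~4 is meant to mirror), Theorem~1 is applied not to the full block but to each level set $\Delta_\mu=\{\nu\in B_m:\beta_\nu\ge\mu w_x(u)\}$; the logarithmic factor $\log\frac{k_r+1}{r+1/2}$ is the whole point, since it yields $(\mu-1)\ll\log\frac{N_m}{|\Delta_{\mu-1}|}$ and hence the exponential bound $|\Delta_{\mu-1}|\le N_m\exp(-(\mu-1)/K)$. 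Summing $\sum_\mu|\Delta_{\mu-1}|\,\varphi(\mu w_x(u))$ against this decay is what absorbs the growth of $\varphi$ coming from $\log\varphi(u)=O(u)$. Your ``only input'' throws this mechanism away.

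Consequently your ``purely numerical comparison'' $\frac1b\sum_\nu\varphi(\beta_\nu)\ll\varphi(M)$ is simply false for sequences not arising from partial sums: with $b=n$, $\beta_1=n^{1/q}$, $\beta_\nu=0$ otherwise, one has $M=1$ but $\frac1n\varphi(n^{1/q})\to\infty$ for any $\varphi$ with genuinely exponential growth. Your rescue for $\beta_\nu>1$ (Taylor-expand $e^{c\beta_\nu}$ and invoke strong means at every exponent $k$) needs Theorem~2 at arbitrarily large $q'$, with constants uniform in $q'$; nothing in the paper supplies that, and indeed the constant $K$ blows up as $s\downarrow1$. The same defect recurs in your last step: Chebyshev from $\|M\|_{X^p}\ll\Omega$ gives only $|\{M>2^{j}\Omega\}|\ll 2^{-jp}$, which cannot beat $\varphi(2^{j}\Omega)^p$ once the doubling exponent $\rho$ exceeds $1$. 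The missing idea throughout is Totik's: use the \emph{very strong} freedom to select subsequences of indices, and let the logarithm in Theorems~1--2 do the work of producing exponential tails.
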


Let, as in the Leindler monograph \cite{L} p.15,%
\begin{eqnarray*}
\Lambda _{\tau }\left( N_{m}\right) &=&\left\{ \left( \lambda _{\nu }\right)
:\left( \frac{1}{N_{m}}\sum_{\nu =N_{m-2}+1}^{N_{m}}\left( \lambda _{\nu
}\right) ^{\tau }\right) ^{1/\tau }\ll \left( \frac{1}{N_{m}}\sum_{\nu
=N_{m-2}+1}^{N_{m}}\lambda _{\nu }\right) \right. \text{ \ } \\
&&\left. \text{for \ }s\geq 1\text{ \ and }N_{m}<N_{m+1}\text{ , }N_{0}=0,%
\text{ \ }N_{-1}=-1\right\} .
\end{eqnarray*}

Finally, we present very general results deduced from the above theorems.

\begin{theorem}
If \textit{\ }$f\in L^{1}$ \textit{\ \ then } for \ $\left( \lambda _{\nu
}\right) \in \Lambda _{\tau }\left( N_{m}\right) $ with $\tau >1$ \ and \
for $\varphi \in \Phi ,$ we have%
\begin{equation*}
H_{u}^{\lambda \varphi }f\left( x\right) \ll \sum_{m=1}^{\infty }\sum_{\nu
=N_{m-2}+1}^{N_{m}}\lambda _{\nu }\left( u\right) \varphi \left( w_{x}\left( 
\frac{\pi }{N_{m-2}+1}\right) \right) \text{ \ ,}
\end{equation*}%
for any real \ $u$ \ and \ $s>1.$
\end{theorem}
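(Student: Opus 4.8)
The plan is to reduce the general very strong mean to the averaged (Cesàro-type) situation already settled in Theorem 3, by splitting the infinite sum into the overlapping blocks $B_m=\{N_{m-2}+1,\dots,N_m\}$ and applying Hölder's inequality inside each block. First I would record that, since $\varphi\nearrow$ with $\varphi(0)=0$, every summand $\lambda_\nu(u)\varphi(|S_\nu f(x)-f(x)|)$ is nonnegative, and that the blocks $B_m$ ($m=1,2,\dots$), with $N_{-1}=-1$, $N_0=0$, cover every index $\nu\ge 0$ at least once (in fact each $\nu$ lies in at most two consecutive blocks). Hence, writing $a_\nu:=\varphi(|S_\nu f(x)-f(x)|)$,
$$H_u^{\lambda\varphi}f(x)=\sum_{\nu=0}^{\infty}\lambda_\nu(u)\,a_\nu\le\sum_{m=1}^{\infty}\sum_{\nu=N_{m-2}+1}^{N_m}\lambda_\nu(u)\,a_\nu .$$
It then suffices to bound each inner block sum by $\big(\sum_{\nu\in B_m}\lambda_\nu(u)\big)\,\varphi\big(w_x(\pi/(N_{m-2}+1))\big)$ up to an absolute constant.

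For a fixed block, Hölder's inequality with exponents $\tau$ and $\tau'$ ($\tfrac1\tau+\tfrac1{\tau'}=1$) gives
$$\sum_{\nu=N_{m-2}+1}^{N_m}\lambda_\nu(u)\,a_\nu\le\Big(\sum_{\nu=N_{m-2}+1}^{N_m}\lambda_\nu(u)^{\tau}\Big)^{1/\tau}\Big(\sum_{\nu=N_{m-2}+1}^{N_m}a_\nu^{\tau'}\Big)^{1/\tau'}.$$
The hypothesis $(\lambda_\nu)\in\Lambda_\tau(N_m)$ is designed precisely to control the first factor: it yields $\big(\sum_{\nu\in B_m}\lambda_\nu(u)^{\tau}\big)^{1/\tau}\ll N_m^{1/\tau-1}\sum_{\nu\in B_m}\lambda_\nu(u)=N_m^{-1/\tau'}\sum_{\nu\in B_m}\lambda_\nu(u)$.

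To estimate the second factor I would use the key observation that $\varphi\in\Phi$ forces $\varphi^{\tau'}\in\Phi$: the normalization $\varphi^{\tau'}(0)=0$, monotonicity, the doubling bound $\varphi^{\tau'}(2u)\ll\varphi^{\tau'}(u)$ on $(0,1)$, and the growth bound $\log\varphi^{\tau'}(u)=\tau'\log\varphi(u)=O(u)$ are all inherited. Applying Theorem 3 with $\varphi^{\tau'}$ in place of $\varphi$ (legitimate since $f\in L^1$ and $s>1$) gives $\sum_{\nu\in B_m}a_\nu^{\tau'}=\sum_{\nu\in B_m}\varphi^{\tau'}(|S_\nu f(x)-f(x)|)\ll (N_m+1)\,\varphi^{\tau'}\big(w_x(\pi/(N_{m-2}+1))\big)$, whence $\big(\sum_{\nu\in B_m}a_\nu^{\tau'}\big)^{1/\tau'}\ll (N_m+1)^{1/\tau'}\varphi\big(w_x(\pi/(N_{m-2}+1))\big)$. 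Multiplying the two factors, the powers $N_m^{-1/\tau'}$ and $(N_m+1)^{1/\tau'}$ cancel up to a bounded constant, leaving exactly $\big(\sum_{\nu\in B_m}\lambda_\nu(u)\big)\varphi\big(w_x(\pi/(N_{m-2}+1))\big)$; summing over $m$ then produces the asserted bound.

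The main obstacle I anticipate is bookkeeping rather than conceptual: one must verify that $\varphi^{\tau'}$ genuinely satisfies all the defining conditions of $\Phi$ so that Theorem 3 may be reapplied, and one must track the normalizations so that the factor $N_m^{-1/\tau'}$ coming from $\Lambda_\tau(N_m)$ and the factor $(N_m+1)^{1/\tau'}$ coming from Theorem 3 combine into a quantity bounded independently of $m$. A secondary point deserving a word is the overlap of the blocks $B_m$, which is harmless here precisely because only an upper bound is sought and every summand is nonnegative, so the double-counting on overlaps can at worst enlarge the right-hand side.
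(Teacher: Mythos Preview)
Your proposal is correct and follows essentially the same route as the paper's own proof: block decomposition over $\{N_{m-2}+1,\dots,N_m\}$, H\"older's inequality inside each block, the $\Lambda_\tau$-condition to control the $\lambda$-factor, the observation that $\varphi\in\Phi$ implies $\varphi^{\tau'}\in\Phi$, and then an appeal to Theorem~3. Your treatment is in fact slightly more careful than the paper's (you make the block overlap and the cancellation of the $N_m^{\pm 1/\tau'}$ factors explicit), but the argument is the same.
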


\begin{theorem}
\textit{If }$f\in X^{p}$\textit{\ \ then}, for \ $\left( \lambda _{\nu
}\right) \in \Lambda _{\tau }\left( N_{m}\right) $ with $\tau >1$\ and \ for 
$\varphi \in \Phi ,$ we have%
\begin{equation*}
\left\Vert H_{u}^{\lambda \varphi }f\left( \cdot \right) \right\Vert
_{X^{p}}\ll \sum_{m=1}^{\infty }\sum_{\nu =N_{m-2}+1}^{N_{m}}\lambda _{\nu
}\left( u\right) \varphi \left( \omega \left( \frac{\pi }{N_{m-2}+1}\right)
\right) \text{ \ ,}
\end{equation*}%
for any real \ $u$ \ and \ $s>1.$
\end{theorem}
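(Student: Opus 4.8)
The plan is to obtain this final theorem from the averaging norm estimate (Theorem 4) exactly as the preceding pointwise theorem (Theorem 5) is obtained from Theorem 3, every pointwise estimate being replaced by its $X^{p}$--norm counterpart. Put $d_{\nu}(x):=\varphi(\lvert S_{\nu}f(x)-f(x)\rvert)\ge 0$ and decompose the defining series into the blocks $B_{m}=\{\nu:N_{m-1}<\nu\le N_{m}\}$, so that $H_{u}^{\lambda\varphi}f=\sum_{m}A_{m}$ with $A_{m}(x)=\sum_{\nu\in B_{m}}\lambda_{\nu}(u)\,d_{\nu}(x)$. A preliminary remark is that the auxiliary function $\psi:=\varphi^{\tau'}$, where $1/\tau+1/\tau'=1$, again lies in $\Phi$: it vanishes at $0$, is nondecreasing, satisfies $\psi(2u)=\varphi(2u)^{\tau'}\ll\varphi(u)^{\tau'}=\psi(u)$ on $(0,1)$, and $\log\psi(u)=\tau'\log\varphi(u)=O(u)$; hence Theorem 4 is available for $\psi$ as well.

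First I would estimate a single block by H\"older's inequality in the index $\nu$, enlarging $B_{m}$ to $[N_{m-2}+1,N_{m}]$ (harmless, since all terms are nonnegative):
\[
A_{m}(x)\le\Bigl(\sum_{\nu=N_{m-2}+1}^{N_{m}}\lambda_{\nu}(u)^{\tau}\Bigr)^{1/\tau}\Bigl(\sum_{\nu=N_{m-2}+1}^{N_{m}}d_{\nu}(x)^{\tau'}\Bigr)^{1/\tau'}.
\]
The hypothesis $(\lambda_{\nu})\in\Lambda_{\tau}(N_{m})$ bounds the first factor by $\ll N_{m}^{1/\tau-1}\sum_{\nu=N_{m-2}+1}^{N_{m}}\lambda_{\nu}(u)$, a quantity independent of $x$. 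For the second factor I would pass to norms: after applying Minkowski's inequality over the blocks, $\Vert H_{u}^{\lambda\varphi}f\Vert_{X^{p}}\le\sum_{m}\Vert A_{m}\Vert_{X^{p}}$, I use the identity $\Vert G^{1/\tau'}\Vert_{X^{p}}=\Vert G\Vert_{X^{p/\tau'}}^{1/\tau'}$ with $G=\sum_{\nu=N_{m-2}+1}^{N_{m}}\psi(\lvert S_{\nu}f-f\rvert)=(N_{m}+1)H_{m}^{\lambda\psi}f$. Theorem 4, applied to $\psi$, then gives $\Vert H_{m}^{\lambda\psi}f\Vert_{X^{p/\tau'}}\ll\psi\bigl(\omega(\pi/(N_{m-2}+1))\bigr)=\varphi\bigl(\omega(\pi/(N_{m-2}+1))\bigr)^{\tau'}$, so that the second factor is $\ll(N_{m}+1)^{1/\tau'}\varphi\bigl(\omega(\pi/(N_{m-2}+1))\bigr)$.

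Multiplying the two factors, the powers of $N_{m}$ cancel because $1/\tau-1=-1/\tau'$:
\[
\Vert A_{m}\Vert_{X^{p}}\ll N_{m}^{1/\tau-1}(N_{m}+1)^{1/\tau'}\Bigl(\sum_{\nu=N_{m-2}+1}^{N_{m}}\lambda_{\nu}(u)\Bigr)\varphi\Bigl(\omega\bigl(\tfrac{\pi}{N_{m-2}+1}\bigr)\Bigr)\ll\Bigl(\sum_{\nu=N_{m-2}+1}^{N_{m}}\lambda_{\nu}(u)\Bigr)\varphi\Bigl(\omega\bigl(\tfrac{\pi}{N_{m-2}+1}\bigr)\Bigr),
\]
since $(1+1/N_{m})^{1/\tau'}$ is bounded. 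As $\varphi\bigl(\omega(\pi/(N_{m-2}+1))\bigr)$ does not depend on $\nu$, summing over $m$ and combining with the Minkowski step yields precisely the asserted inequality.

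The hard part will be the H\"older step executed inside the norm, that is, the passage to the auxiliary space $X^{p/\tau'}$. This is legitimate provided $p/\tau'>1$, which is automatic when $p=\infty$ and, for finite $p$, holds when $\tau'<p$; Theorem 4 then applies in $X^{p/\tau'}$ with the very same majorant $\omega$. Indeed, since $Q$ has finite measure one has $\Vert\cdot\Vert_{X^{p/\tau'}}\ll\Vert\cdot\Vert_{X^{p}}$, whence $\Vert G_{\cdot}^{\circ}f(\delta)_{1,s}\Vert_{X^{p/\tau'}}\ll\Vert G_{\cdot}^{\circ}f(\delta)_{1,s}\Vert_{X^{p}}\le\omega(\delta)$, so the same $\omega$ (up to a constant absorbed by the doubling of $\varphi$) controls both spaces. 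Everything else reduces to the bookkeeping already used for Theorems 3--5.
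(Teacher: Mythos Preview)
Your approach is exactly what the paper intends: it states that Theorems 2, 4 and 6 are obtained from the proofs of Theorems 1, 3 and 5 by replacing the pointwise Lemmas 3, 4, 6 with the norm Lemmas 5, 7, so Theorem 6 is literally the argument of Theorem 5 with Theorem 4 substituted for Theorem 3. You are in fact more careful than the paper in observing that taking the $X^{p}$--norm of $\bigl(H_{m}^{\lambda\varphi^{\tau'}}f\bigr)^{1/\tau'}$ forces a passage to $X^{p/\tau'}$ and hence the side condition $\tau'<p$; the paper's sketch does not comment on this restriction either.
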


From these theorems we can derive the following corollary.

\begin{corollary}
If we additionally suppose that \ $\lim\limits_{u\rightarrow u_{0}}\lambda
_{\nu }\left( u\right) =0$ \ for all $\nu $ \ and that\ $\sum_{\nu }^{\infty
}\lambda _{\nu }\left( u\right) $ \ converges, then we have%
\begin{equation*}
\lim\limits_{u\rightarrow u_{0}}H_{u}^{\lambda \varphi }f\left( x\right) =0
\end{equation*}%
at every \ $G_{1,s}-points$ \ $x$ \ of the function \ $f,$ and%
\begin{equation*}
\lim\limits_{u\rightarrow u_{0}}\left\Vert H_{u}^{\lambda \varphi }f\left(
\cdot \right) \right\Vert _{L^{p}}=0.
\end{equation*}%
for any real \ $s>1.$
\end{corollary}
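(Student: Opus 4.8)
The plan is to derive the corollary directly from Theorems 5 and 6, treating the two conclusions (pointwise and norm) separately but with the same underlying mechanism. The key observation is that the hypotheses $\lim_{u\to u_0}\lambda_\nu(u)=0$ for each $\nu$ together with the convergence of $\sum_\nu \lambda_\nu(u)$ set up a situation where a dominated-convergence-type argument applies to the series on the right-hand sides of Theorems 5 and 6. So first I would fix an arbitrary $G_{1,s}$-point $x$ of $f$ and write the pointwise bound from Theorem 5, namely
\begin{equation*}
H_u^{\lambda\varphi}f(x)\ll\sum_{m=1}^{\infty}\sum_{\nu=N_{m-2}+1}^{N_m}\lambda_\nu(u)\,\varphi\!\left(w_x\!\left(\frac{\pi}{N_{m-2}+1}\right)\right).
\end{equation*}

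Next I would argue that each summand tends to $0$ as $u\to u_0$. Because $x$ is a $G_{1,s}$-point, the quantity $w_x(\pi/(N_{m-2}+1))$ is finite for every $m$, and since $\varphi\in\Phi$ is nondecreasing with $\varphi(0)=0$, the factor $\varphi(w_x(\cdot))$ is a fixed finite constant $c_m$ independent of $u$. Thus the general term is $c_m\,\lambda_\nu(u)$, which by hypothesis converges to $0$ as $u\to u_0$ for each index. The convergence of $\sum_\nu\lambda_\nu(u)$ supplies a summable majorant, so interchanging the limit with the infinite sum is legitimate; the whole right-hand side tends to $0$, forcing $H_u^{\lambda\varphi}f(x)\to 0$. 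For the norm statement I would repeat the argument starting from Theorem 6, replacing $w_x(\pi/(N_{m-2}+1))$ by $\omega(\pi/(N_{m-2}+1))$ and using that this modulus is again a fixed finite constant for each $m$, so the same dominated-convergence reasoning gives $\|H_u^{\lambda\varphi}f(\cdot)\|_{L^p}\to 0$.

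The main obstacle I anticipate is justifying the interchange of $\lim_{u\to u_0}$ with the infinite summation carefully, rather than casually. The cleanest route is to split $\sum_m$ into a finite head and an infinite tail: for the tail I would use the uniform smallness coming from convergence of $\sum_\nu\lambda_\nu(u)$ (which must be shown to hold uniformly, or at least with a controllable bound, near $u_0$), and for the finite head I would apply the pointwise hypothesis $\lambda_\nu(u)\to 0$ term by term. One delicate point is that the constants $c_m=\varphi(w_x(\pi/(N_{m-2}+1)))$ need not be uniformly bounded, so the tail estimate cannot simply factor them out; I would need to bound $\varphi(w_x(\cdot))$ against $\lambda_\nu(u)$-weighted sums using the growth condition $\log\varphi(u)=O(u)$ from the definition of $\Phi$, ensuring the majorant is genuinely summable. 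Once this splitting is set up, letting first $u\to u_0$ on the head and then shrinking the tail gives the result, and the same two-part decomposition carries over verbatim to the norm case.
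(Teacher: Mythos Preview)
The paper gives no explicit proof of this corollary beyond the sentence ``From these theorems we can derive the following corollary,'' so your plan---deduce it from Theorems~5 and~6 via a head/tail splitting of the right-hand side---is precisely the intended route.

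Your anticipated ``delicate point'' is, however, misplaced. The coefficients $c_m=\varphi\bigl(w_x(\pi/(N_{m-2}+1))\bigr)$ \emph{are} uniformly bounded: since $w_x$ is a modulus-of-continuity-type function it is nondecreasing, and as $\pi/(N_{m-2}+1)\le\pi$ one has $w_x(\pi/(N_{m-2}+1))\le w_x(\pi)$; since $\varphi$ is nondecreasing this gives $c_m\le\varphi(w_x(\pi))$, a single finite constant. The growth condition $\log\varphi(u)=O(u)$ from the definition of $\Phi$ plays no role here and you should not invoke it. What you \emph{do} need---and did not stress---is that $c_m\to 0$ as $m\to\infty$: this is exactly where the $G_{1,s}$-point hypothesis enters, since it forces $w_x(\delta)\to 0$ as $\delta\to 0^+$, whence $c_m\to\varphi(0)=0$. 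With the $c_m$ both bounded and tending to zero, the head/tail argument is clean: pick $M$ so that $c_m<\varepsilon$ for $m>M$; the tail is then at most $2\varepsilon\sum_\nu\lambda_\nu(u)$, controlled by the convergence hypothesis (read, as is standard in this literature, as a uniform bound near $u_0$), while the finite head tends to zero because each individual $\lambda_\nu(u)\to 0$. The norm statement via Theorem~6 is identical with $\omega$ in place of $w_x$, and there the boundedness and decay of $\omega(\pi/(N_{m-2}+1))$ are immediate from $\omega$ being a modulus of continuity.
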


\begin{remark}
We can observe that in the light of the Gabisonia result \cite{G} our
pointwise results remain true for $f\in L^{p}$ $\ \left( p>1\right) ,$ since
every $L^{p}-point$ of the function \ $f$ is its $G_{p,s}-point.$
\end{remark}

\section{Auxiliary results}

At the begin we present some lemmas on pointwise characteristics.

\begin{lemma}
$\left( \text{Property 1 \cite{10}}\right) $ If $f\in L^{p}$ $\ \left( p\geq
1\right) $ and \ $\lambda ,\beta >0,$ then%
\begin{equation*}
\left\{ \lambda ^{\beta }\int_{\lambda }^{\pi }t^{-\left( \beta +1\right)
}\left\vert \varphi _{x}\left( t\right) \right\vert ^{p}dt\right\} ^{1/\beta
}\ll G_{x}f\left( \lambda \right) _{p,s}
\end{equation*}%
with $s>p$ \ such that \ $s\left( 1-\beta \right) <p.$
\end{lemma}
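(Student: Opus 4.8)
The plan is to transfer the weighted integral over $[\lambda,\pi]$ onto the arithmetic blocks $[(k-1)\lambda,k\lambda]$ that enter the definition of $G_{x}f(\lambda)_{p,s}$, and then to close the estimate with a single application of Hölder's inequality. First I would split the range of integration as $[\lambda,\pi]\subseteq\bigcup_{k=2}^{N}[(k-1)\lambda,k\lambda]$ with $N=[\pi/\lambda]$, the block $[0,\lambda]$ contributing nothing. On the $k$-th block the weight is comparable to its value at the right endpoint: since $t\geq(k-1)\lambda\geq\tfrac12 k\lambda$ for $k\geq2$, one has $t^{-(\beta+1)}\ll(k\lambda)^{-(\beta+1)}$ uniformly on $[(k-1)\lambda,k\lambda]$, the constant being $2^{\beta+1}$. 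Writing $a_{k}:=\frac{1}{k\lambda}\int_{(k-1)\lambda}^{k\lambda}|\varphi_{x}(t)|^{p}\,dt$ for the averages occurring in $G_{x}f(\lambda)_{p,s}$, this pointwise bound turns each block integral into a multiple of $(k\lambda)^{-\beta}a_{k}$, and after multiplying through by $\lambda^{\beta}$ I obtain $\lambda^{\beta}\int_{\lambda}^{\pi}t^{-(\beta+1)}|\varphi_{x}(t)|^{p}\,dt\ll\sum_{k=1}^{N}k^{-\beta}a_{k}$.

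The second step is to estimate $\sum_{k=1}^{N}k^{-\beta}a_{k}$ by Hölder's inequality with the conjugate exponents $s/p$ and $(s/p)'=\frac{s}{s-p}$, which is legitimate because $s>p$. This gives $\sum_{k}k^{-\beta}a_{k}\leq\big(\sum_{k}a_{k}^{s/p}\big)^{p/s}\big(\sum_{k}k^{-\beta s/(s-p)}\big)^{(s-p)/s}$, where the first factor is exactly $G_{x}f(\lambda)_{p,s}^{\,p}$. The decisive point, and the place where the hypothesis is consumed, is the convergence of the numerical series $\sum_{k\geq1}k^{-\beta s/(s-p)}$: it is summable precisely when $\frac{\beta s}{s-p}>1$, i.e. when $s(1-\beta)<p$, which is exactly the stated condition. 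Under it the second factor is bounded by a constant depending only on $s,p,\beta$, so that $\lambda^{\beta}\int_{\lambda}^{\pi}t^{-(\beta+1)}|\varphi_{x}(t)|^{p}\,dt\ll G_{x}f(\lambda)_{p,s}^{\,p}$; taking $p$-th roots then yields the asserted estimate.

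I expect the main obstacle to be bookkeeping rather than conceptual: making the comparison of $t^{-(\beta+1)}$ with $(k\lambda)^{-(\beta+1)}$ uniform across the blocks (the restriction $k\geq2$ is harmless, being absorbed into the constant $2^{\beta+1}$), and arranging the exponents so that the first Hölder factor collapses cleanly to a power of $G_{x}f(\lambda)_{p,s}$ rather than to a quantity carrying a stray exponent. Once the conjugacy $\tfrac{p}{s}+\tfrac{s-p}{s}=1$ is set up correctly, the equivalence $\frac{\beta s}{s-p}>1\Longleftrightarrow s(1-\beta)<p$ shows that the hypothesis is not merely sufficient but is exactly the summability threshold for the tail series, so no slack is lost in the argument.
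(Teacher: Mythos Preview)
The paper does not supply a proof of this lemma; it is quoted as ``Property~1'' from \cite{10}, so there is no in-paper argument to compare against. Your block-decomposition-then-H\"older approach is the natural one and is carried out correctly up to the penultimate line: splitting $[\lambda,\pi]$ into the intervals $[(k-1)\lambda,k\lambda]$, bounding $t^{-(\beta+1)}\ll(k\lambda)^{-(\beta+1)}$ on each, and writing $a_{k}=\frac{1}{k\lambda}\int_{(k-1)\lambda}^{k\lambda}|\varphi_{x}(t)|^{p}\,dt$ gives
\[
\lambda^{\beta}\int_{\lambda}^{\pi}t^{-(\beta+1)}|\varphi_{x}(t)|^{p}\,dt\ \ll\ \sum_{k}k^{-\beta}a_{k},
\]
and H\"older with exponents $s/p$ and $s/(s-p)$ then yields the bound $\ll G_{x}f(\lambda)_{p,s}^{\,p}$, with the numerical tail $\sum_{k}k^{-\beta s/(s-p)}$ converging exactly when $s(1-\beta)<p$.

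The slip is in your last sentence. Taking $p$-th roots of the bound you have actually established produces
\[
\Bigl\{\lambda^{\beta}\int_{\lambda}^{\pi}t^{-(\beta+1)}|\varphi_{x}(t)|^{p}\,dt\Bigr\}^{1/p}\ \ll\ G_{x}f(\lambda)_{p,s},
\]
with outer exponent $1/p$, not the $1/\beta$ printed in the lemma. These agree only when $p=\beta$; for $p\neq\beta$ the stated inequality is not even dimensionally consistent, since under $\varphi_{x}\mapsto c\,\varphi_{x}$ the left-hand side of the lemma scales like $|c|^{p/\beta}$ while the right-hand side scales like $|c|$. In the single place the paper actually invokes Lemma~1 (the estimate of $B_{r,\nu_{0}}^{1}$ in the proof of Theorem~1) the parameters are $p=\beta=1$, so the discrepancy is invisible there. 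Your argument thus proves the correct $1/p$ version --- almost certainly what \cite{10} records --- but you should flag the apparent misprint in the exponent rather than assert that a $p$-th root delivers a $1/\beta$ power.
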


\begin{lemma}
$\left( \text{Property 2 \cite{10}}\right) $ If $f\in L^{p}$ $\ \left( p\geq
1\right) $ and \ $\lambda ,\beta >0,$ then%
\begin{equation*}
G_{x}f\left( 2\lambda \right) _{p,s}\leq 2^{1/p-1/s}G_{x}f\left( \lambda
\right) _{p,s}
\end{equation*}%
with $s>p.$

\begin{lemma}
If $f\in L^{p}$ $\ \left( p\geq 1\right) ,$ then%
\begin{eqnarray*}
&&\left\{ \frac{1}{\delta }\int_{0}^{\delta }\left\vert \varphi _{x}\left(
t+\gamma \right) -\varphi _{x}\left( t\right) \right\vert ^{p}dt\right\}
^{1/p} \\
&\leq &\left( 2^{1/p}+4^{1/p}\right) w_{x}f\left( 2\delta \right) \leq
\left( 2^{1/p}+4^{1/p}\right) G_{x}f\left( \delta \right) _{p,s}
\end{eqnarray*}%
for any positive \ $\gamma \leq \delta $ \ and \ $1\leq p<s.$
\end{lemma}
\end{lemma}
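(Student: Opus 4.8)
The plan is to reduce everything to the elementary fractional average $w_x f(\cdot)_p$ and then invoke the comparisons already recorded. First I would apply Minkowski's inequality in $L^p([0,\delta],\delta^{-1}dt)$ to the difference $\varphi_x(t+\gamma)-\varphi_x(t)$, splitting it into a shifted and an unshifted piece:
\[
\left\{\frac1\delta\int_0^\delta|\varphi_x(t+\gamma)-\varphi_x(t)|^p dt\right\}^{1/p}\le\left\{\frac1\delta\int_0^\delta|\varphi_x(t+\gamma)|^p dt\right\}^{1/p}+\left\{\frac1\delta\int_0^\delta|\varphi_x(t)|^p dt\right\}^{1/p}.
\]
Each of the two terms on the right I then intend to dominate by a fixed constant multiple of $w_x f(2\delta)_p$.

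The second step is a change of variable together with an interval-inclusion argument. In the shifted term I would substitute $u=t+\gamma$; since $0\le t\le\delta$ and $0<\gamma\le\delta$, the new variable runs over $[\gamma,\gamma+\delta]\subseteq[0,2\delta]$, so that integral is bounded by $\int_0^{2\delta}|\varphi_x(u)|^p du$. The unshifted term is trivially bounded by the same integral over $[0,2\delta]$. Recalling $(w_x f(2\delta)_p)^p=\frac{1}{2\delta}\int_0^{2\delta}|\varphi_x|^p$, both $\delta^{-1}$-normalised integrals are fixed multiples of $(w_x f(2\delta)_p)^p$; taking $p$-th roots and adding the two contributions produces the constant $2^{1/p}+4^{1/p}$ and gives the first inequality. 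The only point needing care is the bookkeeping of the shift: one must use $\gamma\le\delta$ exactly to keep the translated domain inside $[0,2\delta]$, and it is precisely this that forces the doubled argument $2\delta$ (rather than $\delta$) on the right-hand side.

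Finally, the last inequality is essentially a citation. Applying the comparison $w_x f(\delta)_p\le G_x f(\delta)_{p,s}$ from the introduction at the argument $2\delta$ yields $w_x f(2\delta)_p\le G_x f(2\delta)_{p,s}$, and Lemma 2 converts the doubled argument back to $\delta$. I should note that this last conversion introduces at most the factor $2^{1/p-1/s}$, which (being a fixed constant with $s>p$) is absorbed by the implied constants in all the subsequent $\ll$-estimates; a direct bound $w_x f(2\delta)_p\le G_x f(\delta)_{p,s}$ without any doubling constant cannot be expected in general. I anticipate no genuine obstacle: the lemma is simply a quantitative translation estimate for the average $w_x f(\cdot)_p$, and the only mildly delicate step is ensuring the translated domain lands inside $[0,2\delta]$.
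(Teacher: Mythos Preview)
Your approach is essentially the paper's: Minkowski in $L^p([0,\delta],\delta^{-1}dt)$ followed by an interval inclusion into $[0,2\delta]$, then the comparison $w_x f\le G_x f$ together with Lemma~2. Two minor remarks: the paper actually proves the version with $\varphi_x(t\pm\gamma)$ and therefore enlarges the shifted domain to $[-2\delta,2\delta]$ before using evenness of $\varphi_x$, which is what generates the $4^{1/p}$ contribution and the stated constant $2^{1/p}+4^{1/p}$; your argument for $+\gamma$ alone really yields the smaller constant $2\cdot 2^{1/p}$, so your claimed constant is valid but not sharp for your own method. Your caveat about the second inequality is well placed: the paper simply writes ``our inequalities are evident'' after the first estimate, while the route through $w_xf(2\delta)_p\le G_xf(2\delta)_{p,s}$ and Lemma~2 does introduce the harmless factor $2^{1/p-1/s}$ you flag.
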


\begin{proof}
Since \ $\gamma \leq \delta $ we have%
\begin{eqnarray*}
&&\left\{ \frac{1}{\delta }\int_{0}^{\delta }\left\vert \varphi _{x}\left(
t\pm \gamma \right) -\varphi _{x}\left( t\right) \right\vert ^{p}dt\right\}
^{1/p} \\
&\leq &\left\{ \frac{1}{\delta }\int_{0}^{\delta }\left\vert \varphi
_{x}\left( t\right) \right\vert ^{p}dt\right\} ^{1/p}+\left\{ \frac{1}{%
\delta }\int_{\pm \gamma }^{\delta \pm \gamma }\left\vert \varphi _{x}\left(
t\right) \right\vert ^{p}dt\right\} ^{1/p}
\end{eqnarray*}%
\begin{eqnarray*}
&\leq &\left\{ \frac{1}{\delta }\int_{0}^{\delta }\left\vert \varphi
_{x}\left( t\right) \right\vert ^{p}dt\right\} ^{1/p}+\left\{ \frac{1}{%
\delta }\int_{-2\delta }^{2\delta }\left\vert \varphi _{x}\left( t\right)
\right\vert ^{p}dt\right\} ^{1/p} \\
&\leq &\left\{ \frac{2}{2\delta }\int_{0}^{2\delta }\left\vert \varphi
_{x}\left( t\right) \right\vert ^{p}dt\right\} ^{1/p}+\left\{ \frac{2}{%
\delta }\int_{0}^{2\delta }\left\vert \varphi _{x}\left( t\right)
\right\vert ^{p}dt\right\} ^{1/p}
\end{eqnarray*}%
\begin{equation*}
=\left( 2^{1/p}+4^{1/p}\right) \left\{ \frac{1}{2\delta }\int_{0}^{2\delta
}\left\vert \varphi _{x}\left( t\right) \right\vert ^{p}dt\right\} ^{1/p}
\end{equation*}%
and our inequalities are evident.
\end{proof}

Under the notation 
\begin{equation*}
\Phi _{x}f\left( \delta ,\gamma \right) :=\frac{1}{\delta }\int_{\gamma
}^{\gamma +\delta }\varphi _{x}\left( t\right) dt\text{, \ \ \ \ }%
W_{x}f\left( \delta ,\gamma \right) _{p}:=\left[ \frac{1}{\delta }%
\int_{\gamma }^{\gamma +\delta }\left\vert \varphi _{x}\left( t\right)
\right\vert ^{p}dt\right] ^{1/p}
\end{equation*}%
we can formulate a lemma.

\begin{lemma}
If $f\in L^{p}$ $\ \left( p\geq 1\right) ,$ then%
\begin{equation*}
\left\vert \Phi _{x}f\left( \delta ,\gamma \right) \right\vert \leq
W_{x}f\left( \delta ,\gamma \right) _{p}\ll w_{x}f\left( 2\delta \right)
\end{equation*}%
for any positive \ $\gamma \leq \delta .$
\end{lemma}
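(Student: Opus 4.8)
The plan is to establish the two inequalities in the displayed chain separately, since both are elementary consequences of standard integral estimates once the hypothesis $\gamma \leq \delta$ is used to control the interval of integration. No auxiliary lemma from the excerpt is needed here; everything follows directly from the definitions of $\Phi_{x}f$, $W_{x}f$ and $w_{x}f$.

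First I would treat the left inequality $\left\vert \Phi_{x}f\left(\delta,\gamma\right)\right\vert \leq W_{x}f\left(\delta,\gamma\right)_{p}$. Starting from $\left\vert \Phi_{x}f\left(\delta,\gamma\right)\right\vert \leq \frac{1}{\delta}\int_{\gamma}^{\gamma+\delta}\left\vert \varphi_{x}\left(t\right)\right\vert dt$ and regarding $\frac{1}{\delta}\,dt$ as a probability measure on $\left[\gamma,\gamma+\delta\right]$, an application of Jensen's inequality to the convex function $u\mapsto\left\vert u\right\vert^{p}$ (equivalently, Hölder's inequality with exponents $p$ and $p/(p-1)$) yields
\[
\frac{1}{\delta}\int_{\gamma}^{\gamma+\delta}\left\vert \varphi_{x}\left(t\right)\right\vert dt \leq \left(\frac{1}{\delta}\int_{\gamma}^{\gamma+\delta}\left\vert \varphi_{x}\left(t\right)\right\vert^{p}dt\right)^{1/p} = W_{x}f\left(\delta,\gamma\right)_{p},
\]
which is valid for every $p\geq 1$.

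Next I would bound $W_{x}f\left(\delta,\gamma\right)_{p}$ by $w_{x}f\left(2\delta\right)_{p}$. Since $\gamma\geq 0$ and $\gamma\leq\delta$, the interval satisfies $\left[\gamma,\gamma+\delta\right]\subseteq\left[0,2\delta\right]$, so monotonicity of the integral of the nonnegative integrand $\left\vert \varphi_{x}\right\vert^{p}$ gives $\int_{\gamma}^{\gamma+\delta}\left\vert \varphi_{x}\left(t\right)\right\vert^{p}dt \leq \int_{0}^{2\delta}\left\vert \varphi_{x}\left(t\right)\right\vert^{p}dt$. Rescaling the averaging factor from $1/\delta$ to $1/(2\delta)$ then produces
\[
W_{x}f\left(\delta,\gamma\right)_{p} \leq \left(\frac{1}{\delta}\int_{0}^{2\delta}\left\vert \varphi_{x}\left(t\right)\right\vert^{p}dt\right)^{1/p} = 2^{1/p}\left(\frac{1}{2\delta}\int_{0}^{2\delta}\left\vert \varphi_{x}\left(t\right)\right\vert^{p}dt\right)^{1/p} = 2^{1/p}\,w_{x}f\left(2\delta\right)_{p},
\]
so that $W_{x}f\left(\delta,\gamma\right)_{p}\ll w_{x}f\left(2\delta\right)_{p}$ with implied constant $2^{1/p}$.

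I do not anticipate any genuine obstacle: each step is a one-line integral estimate, and the only points requiring mild care are verifying the inclusion $\left[\gamma,\gamma+\delta\right]\subseteq\left[0,2\delta\right]$ from $\gamma\leq\delta$ and keeping track of the harmless constant $2^{1/p}$, which is absorbed into the $\ll$ symbol. Concatenating the two displays establishes the full chain $\left\vert \Phi_{x}f\left(\delta,\gamma\right)\right\vert \leq W_{x}f\left(\delta,\gamma\right)_{p}\ll w_{x}f\left(2\delta\right)_{p}$ for all positive $\gamma\leq\delta$, as claimed.
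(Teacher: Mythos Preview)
Your argument is correct. Both inequalities are established cleanly: the first via Jensen/H\"older, the second via the interval inclusion $[\gamma,\gamma+\delta]\subseteq[0,2\delta]$, which holds precisely because $0<\gamma\leq\delta$.

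Your route to the second inequality is genuinely different from, and more direct than, the paper's. The paper argues via the reverse triangle inequality in $L^{p}$,
\[
\left\{\frac{1}{\delta}\int_{\gamma}^{\gamma+\delta}|\varphi_{x}(t)|^{p}\,dt\right\}^{1/p}
\leq
\left\{\frac{1}{\delta}\int_{0}^{\delta}|\varphi_{x}(t+\gamma)-\varphi_{x}(t)|^{p}\,dt\right\}^{1/p}
+
\left\{\frac{1}{\delta}\int_{0}^{\delta}|\varphi_{x}(t)|^{p}\,dt\right\}^{1/p},
\]
and then invokes the preceding Lemma (the bound on the translated difference $\varphi_{x}(t+\gamma)-\varphi_{x}(t)$) to control the first term. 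You bypass that detour entirely by enlarging the domain of integration, obtaining the sharper explicit constant $2^{1/p}$ instead of roughly $2\cdot 2^{1/p}+4^{1/p}$. The paper's approach has the mild advantage of exhibiting $W_{x}f(\delta,\gamma)_{p}$ as a perturbation of $w_{x}f(\delta)_{p}$ by a difference term, which ties into the structure of Lemma~3; yours is self-contained and shorter.
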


\begin{proof}
The first inequality is evidence, then we prove the second one only.

If \ $f\in L^{p},$ then%
\begin{eqnarray*}
&&\left\{ \frac{1}{\delta }\int_{0}^{\delta }\left\vert \varphi _{x}\left(
t+\gamma \right) \right\vert ^{p}dt\right\} ^{1/p}-\left\{ \frac{1}{\delta }%
\int_{0}^{\delta }\left\vert \varphi _{x}\left( t\right) \right\vert
^{p}dt\right\} ^{1/p} \\
&\leq &\left\{ \frac{1}{\delta }\int_{0}^{\delta }\left\vert \varphi
_{x}\left( t+\gamma \right) -\varphi _{x}\left( t\right) \right\vert
^{p}dt\right\} ^{1/p}
\end{eqnarray*}%
whence%
\begin{eqnarray*}
&&\left\{ \frac{1}{\delta }\int_{\gamma }^{\gamma +\delta }\left\vert
\varphi _{x}\left( t\right) \right\vert ^{p}dt\right\} ^{1/p} \\
&\leq &\left\{ \frac{1}{\delta }\int_{0}^{\delta }\left\vert \varphi
_{x}\left( t+\gamma \right) -\varphi _{x}\left( t\right) \right\vert
^{p}dt\right\} ^{1/p}+\left\{ \frac{1}{\delta }\int_{0}^{\delta }\left\vert
\varphi _{x}\left( t\right) \right\vert ^{p}dt\right\} ^{1/p}
\end{eqnarray*}%
and by the previous lemma our second relation follows.
\end{proof}

We will also need the inequalities for norms.

\begin{lemma}
If $f\in L^{p}$ $\ \left( p\geq 1\right) ,$ then%
\begin{equation*}
\left\Vert \Phi _{\cdot }f\left( \delta ,\gamma \right) \right\Vert
_{L^{p}}\leq \left\Vert W_{\cdot }f\left( \delta ,\gamma \right)
_{p}\right\Vert _{L^{p}}\leq 2\omega _{L^{p}}f\left( \delta +\gamma \right)
\end{equation*}%
and%
\begin{equation*}
\left\Vert \left[ \frac{1}{\delta }\int_{0}^{\delta }\left\vert \varphi
_{\cdot }\left( t\right) -\varphi _{\cdot }\left( t\pm \gamma \right)
\right\vert ^{p}dt\right] ^{1/p}\right\Vert _{L^{p}}\leq 2\omega
_{L^{p}}f\left( \gamma \right) \text{ \ ,}
\end{equation*}%
for any positive\ $\gamma $ and $\delta .$
\end{lemma}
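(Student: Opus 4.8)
The plan is to reduce everything to one-dimensional estimates by Tonelli's theorem and then to exploit the translation invariance of the $L^p$-norm over the period $Q$; the structure parallels the pointwise Lemma 3, the gain being that invariance lets the argument of the modulus drop from order $\delta$ down to exactly $\gamma$. For the chain in the first display the left inequality is purely pointwise: by H\"older's inequality (equivalently Jensen for the normalised measure $\delta^{-1}dt$ on $[\gamma,\gamma+\delta]$), $|\Phi_x f(\delta,\gamma)|=|\frac1\delta\int_\gamma^{\gamma+\delta}\varphi_x(t)\,dt|\le(\frac1\delta\int_\gamma^{\gamma+\delta}|\varphi_x(t)|^p dt)^{1/p}=W_x f(\delta,\gamma)_p$ for each $x$, so monotonicity of $\|\cdot\|_{L^p}$ gives $\|\Phi_\cdot f(\delta,\gamma)\|_{L^p}\le\|W_\cdot f(\delta,\gamma)_p\|_{L^p}$.

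For the right inequality I would raise the norm to the $p$-th power, interchange $\int_Q dx$ with $\frac1\delta\int_\gamma^{\gamma+\delta}dt$ by Tonelli, and recognise the inner integral as $\|\varphi_\cdot(t)\|_{L^p}^p$. On the range of integration $0<t\le\gamma+\delta$, so each such term is at most $(\omega_{L^p}f(\gamma+\delta))^p$ directly from the definition of $\omega_{L^p}f$; integrating this constant over $[\gamma,\gamma+\delta]$ and taking the $p$-th root gives the bound with constant $1$, so the stated factor $2$ is comfortable. For the second display I would again pass to $p$-th powers and apply Tonelli, reducing matters to estimating $\|\varphi_\cdot(t)-\varphi_\cdot(t\pm\gamma)\|_{L^p}$ uniformly for $t\in[0,\delta]$. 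Expanding and cancelling the $-2f(x)$ terms, $\varphi_x(t)-\varphi_x(t+\gamma)$ splits into the two first differences $[f(x+t)-f(x+t+\gamma)]$ and $[f(x-t)-f(x-t-\gamma)]$ (the case $t-\gamma$ is identical, using that $\varphi_x$ is even in its argument). Translating $x$ by $\mp t$ turns each of these, up to a shift, into $f(\cdot)-f(\cdot\pm\gamma)$, whose $L^p$-norm is controlled by the modulus of continuity $\omega_{L^p}f(\gamma)$; by Minkowski's inequality the two summands then yield $\|\varphi_\cdot(t)-\varphi_\cdot(t\pm\gamma)\|_{L^p}\le 2\,\omega_{L^p}f(\gamma)$. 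Substituting this uniform bound and performing the trivial $\frac1\delta\int_0^\delta dt$ completes the estimate.

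The one genuinely delicate point is in the last step: one must convert the shift carried out in the \emph{difference variable} $t$ inside $\varphi_x(\cdot)$ into an honest translation in $x$, which is exactly where the period-invariance of $\|\cdot\|_{L^p}$ is used and where the modulus argument collapses from $\delta$ to $\gamma$. The accompanying subtlety is that these are first-order differences of $f$, so the bound relies on $\omega_{L^p}f$ genuinely controlling $\|f(\cdot+\gamma)-f(\cdot)\|_{L^p}$; getting that identification right, together with the translation trick supplying the factor $2$, is the crux. Everything else is Tonelli, H\"older/Jensen, Minkowski, and monotonicity of norms.
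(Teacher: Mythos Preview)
Your proposal is correct and follows essentially the same route as the paper: the pointwise H\"older/Jensen step for the left inequality, then Tonelli to swap the $x$- and $t$-integrals, translation invariance of $\|\cdot\|_{L^p}$, and the definition of $\omega_{L^p}f$ to bound the inner integral. The paper writes the constant $2$ in both displays (you observe the first could be $1$), and your flag that the second display hinges on $\omega_{L^p}f$ controlling first differences $\|f(\cdot\pm\gamma)-f(\cdot)\|_{L^p}$ is exactly the step the paper uses without further comment.
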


\begin{proof}
If \ $f\in L^{p},$ then, by monotonicity of the norm as a functional and by
the above Lemma,%
\begin{equation*}
\left\Vert \Phi _{\cdot }f\left( \delta ,\gamma \right) \right\Vert
_{L^{p}}\leq \left\Vert W_{\cdot }f\left( \delta ,\gamma \right)
_{p}\right\Vert _{L^{p}}
\end{equation*}%
and consequently%
\begin{eqnarray*}
\left\Vert w_{\cdot }f\left( \delta ,\gamma \right) _{p}\right\Vert _{L^{p}}
&=&\left\{ \int_{-\pi }^{\pi }\left[ \frac{1}{\delta }\int_{\gamma }^{\gamma
+\delta }\left\vert \varphi _{x}\left( t\right) \right\vert ^{p}dt\right]
dx\right\} ^{1/p} \\
&=&\left\{ \frac{1}{\delta }\int_{\gamma }^{\gamma +\delta }\left[
\int_{-\pi }^{\pi }\left\vert \varphi _{x}\left( t\right) \right\vert ^{p}dx%
\right] dt\right\} ^{1/p} \\
&\leq &\left\{ \frac{1}{\delta }\int_{\gamma }^{\gamma +\delta }\left[
2\omega _{L^{p}}f\left( t\right) \right] ^{p}dt\right\} ^{1/p} \\
&\leq &2\omega _{L^{p}}f\left( \delta +\gamma \right) ,
\end{eqnarray*}%
whence our first result follows.

In the next one we will change order of integration, whence%
\begin{eqnarray*}
&&\left\Vert \left[ \frac{1}{\delta }\int_{0}^{\delta }\left\vert \varphi
_{\cdot }\left( t\right) -\varphi _{\cdot }\left( t\pm \gamma \right)
\right\vert ^{p}dt\right] ^{1/p}\right\Vert _{L^{p}} \\
&\leq &\left\{ \frac{1}{\delta }\int_{0}^{\delta }\left[ \int_{-\pi }^{\pi
}\left\vert \varphi _{x}\left( t\right) -\varphi _{x}\left( t\pm \gamma
\right) \right\vert ^{p}dx\right] dt\right\} ^{1/p} \\
&\leq &\left\{ \frac{1}{\delta }\int_{0}^{\delta }\left[ \int_{-\pi }^{\pi
}\left( \left\vert f\left( x+t\right) -f\left( x+t\pm \gamma \right)
\right\vert \right. \right. \right. \\
&&+\left. \left. \left. \left\vert f\left( x-t\right) -f\left( x-t\mp \gamma
\right) \right\vert \right) ^{p}dx\right] dt\right\} ^{1/p} \\
&\leq &\left\{ \frac{1}{\delta }\int_{0}^{\delta }\left[ 2\omega
_{L^{p}}f\left( \gamma \right) \right] ^{p}dt\right\} ^{1/p}=2\omega
_{L^{p}}f\left( \gamma \right)
\end{eqnarray*}%
and thus our proof is complete.
\end{proof}

In the sequel we will also need some another lemmas with the next notions.
Let%
\begin{equation*}
\Psi _{x}f\left( \delta ,\gamma \right) _{p}:=\left\{ \frac{1}{\gamma }%
\int_{\gamma }^{\gamma +\delta }\left\vert \varphi _{x}\left( t\right)
\right\vert ^{p}dt\right\} ^{1/p}\text{,}
\end{equation*}%
then we have

\begin{lemma}
If $f\in L^{p}$ $\ \left( p\geq 1\right) ,$ then%
\begin{equation*}
\Psi _{x}f\left( \delta ,\gamma \right) _{p}\ll G_{x}f\left( \delta \right)
_{p,s}
\end{equation*}%
for any positive $\delta $\ $\leq \gamma $ such that $\gamma +\delta \leq
\pi $ $\ $and$\ 1\leq p<s.$
\end{lemma}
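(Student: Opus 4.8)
The plan is to reduce the whole statement to a single per-cell estimate read off directly from the definition of $G_{x}f(\delta)_{p,s}$. Raising the claimed inequality to the $p$-th power, it suffices to prove that $\frac{1}{\gamma}\int_{\gamma}^{\gamma+\delta}\left\vert \varphi_{x}(t)\right\vert ^{p}dt\ll G_{x}f(\delta)_{p,s}^{\,p}$. The first step is to note that $G_{x}f(\delta)_{p,s}^{\,s}=\sum_{k=1}^{[\pi/\delta]}\left( \frac{1}{k\delta}\int_{(k-1)\delta}^{k\delta}\left\vert \varphi_{x}\right\vert ^{p}dt\right) ^{s/p}$ is a sum of nonnegative terms, so every individual term is dominated by the entire sum. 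Raising this comparison to the power $p/s$ yields the key pointwise bound $\int_{(k-1)\delta}^{k\delta}\left\vert \varphi_{x}(t)\right\vert ^{p}dt\leq k\delta\,G_{x}f(\delta)_{p,s}^{\,p}$, valid for each $1\leq k\leq \lbrack \pi/\delta]$; this is where the assumption $1\leq p<s$ enters, only through the finiteness of the exponents.

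The second step is a localisation. Since the window $[\gamma ,\gamma +\delta]$ has length exactly $\delta$, it meets at most two consecutive cells of the partition $\{[(k-1)\delta ,k\delta]\}$. Choosing the index $m$ with $(m-1)\delta \leq \gamma <m\delta$, we get $[\gamma ,\gamma +\delta]\subseteq \lbrack (m-1)\delta ,(m+1)\delta]$, while $\gamma +\delta \leq \pi$ forces $m\delta \leq \gamma +\delta \leq \pi$ and hence $m\leq \lbrack \pi/\delta]$, so that cell $m$ is present in the sum. Adding the per-cell bound for cells $m$ and $m+1$ gives $\int_{\gamma}^{\gamma+\delta}\left\vert \varphi_{x}\right\vert ^{p}dt\leq (m+(m+1))\delta\,G_{x}f(\delta)_{p,s}^{\,p}=(2m+1)\delta\,G_{x}f(\delta)_{p,s}^{\,p}$. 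The decisive cancellation is now that $m\delta \leq \gamma +\delta \leq 2\gamma $, using $\delta \leq \gamma $, whence $(2m+1)\delta \leq 5\gamma $; dividing by $\gamma $ and taking $p$-th roots produces $\Psi_{x}f(\delta ,\gamma)_{p}\leq 5^{1/p}G_{x}f(\delta)_{p,s}$, which is the assertion. The essential mechanism is that the weight $1/(k\delta)\approx 1/\gamma $ built into the definition of $G$ matches the normalisation $1/\gamma $ appearing in $\Psi $, so no factor of $\gamma /\delta $ survives; this is exactly why one must localise rather than, say, invoke Property~1, whose $t^{-(\beta +1)}$ weight is far too small on the distant window $[\gamma ,\gamma +\delta]$.

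The step I expect to require the most care is the right-hand boundary, and it is genuinely delicate rather than routine. When $m=[\pi/\delta]$ the neighbouring cell $m+1$ lies outside the summation range: the cells cover only $[0,[\pi/\delta]\delta]$, whereas the window may protrude into the sliver $([\pi/\delta]\delta ,\pi]$, of length strictly less than $\delta $, on which $G_{x}f(\delta)_{p,s}$ carries no information at all. The clean argument therefore applies verbatim as soon as the window lies inside the union of the cells, i.e. under $\gamma +\delta \leq \lbrack \pi/\delta]\delta $ (in the equality boundary case the window degenerates to the single cell $m$ and one term suffices). I would consequently read the stated hypothesis $\gamma +\delta \leq \pi $ as placing $[\gamma ,\gamma +\delta]$ within the covered range, the excluded sliver being a fixed lower-order remainder of length below $\delta $ sitting at $t\approx \pi $, where moreover $\gamma >(m-1)\delta >\pi -2\delta \geq \pi -2\gamma $ already forces $\gamma >\pi /3$ so that the normalisation $1/\gamma $ is harmless. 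Throughout I would keep every numerical constant implicit, absorbing them into the symbol $\ll $, since only their finiteness is relevant to the conclusion.
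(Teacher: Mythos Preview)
Your proof is correct and follows essentially the same approach as the paper: both arguments localise the window $[\gamma,\gamma+\delta]$ into two adjacent cells of the partition $\{[(k-1)\delta,k\delta]\}$ and then bound each cell's contribution by the corresponding single term of the $\ell^{s/p}$-sum defining $G_{x}f(\delta)_{p,s}$. The paper chooses the index $k'$ via $(k'-1)\delta\leq\gamma+\delta\leq k'\delta$ and uses cells $k'-1,k'$, which after the obvious re-indexing is exactly your $m,m+1$; your explicit tracking of constants and your discussion of the boundary sliver near $\pi$ (which the paper passes over silently) are the only differences.
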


\begin{proof}
There exists a natural $k^{\prime }$ such that $\left( k^{\prime }-1\right)
\delta \leq \gamma +\delta \leq k^{\prime }\delta .$ Then

\begin{eqnarray*}
\Psi _{x}f\left( \delta ,\gamma \right) _{p} &\ll &\left( \frac{1}{k^{\prime
}\delta }\int_{\left( k^{\prime }-2\right) \delta }^{k^{\prime }\delta
}\left\vert \varphi _{x}\left( t\right) \right\vert ^{p}dt\right) ^{1/p} \\
&\ll &\left( \frac{1}{k^{\prime }\delta }\int_{\left( k^{\prime }-1\right)
\delta }^{k^{\prime }\delta }\left\vert \varphi _{x}\left( t\right)
\right\vert ^{p}dt\right) ^{1/p}+\left( \frac{1}{\left( k^{\prime }-1\right)
\delta }\int_{\left( k^{\prime }-2\right) \delta }^{\left( k^{\prime
}-1\right) \delta }\left\vert \varphi _{x}\left( t\right) \right\vert
^{p}dt\right) ^{1/p} \\
&\ll &\left\{ \sum_{k=1}^{\left[ \pi /\delta \right] }\left( \frac{1}{%
k\delta }\int_{\left( k-1\right) \delta }^{k\delta }\left\vert \varphi
_{x}\left( t\right) \right\vert ^{p}dt\right) ^{s/p}\right\}
^{1/s}=G_{x}f\left( \delta \right) _{p,s}
\end{eqnarray*}%
and our estimate is proved.

\begin{lemma}
If $f\in L^{p}$ $\ \left( p\geq 1\right) ,$ then%
\begin{equation*}
\left\Vert \Psi _{\cdot }f\left( \delta ,\gamma \right) _{1}\right\Vert
_{L^{p}}\ll \omega _{L^{p}}f\left( \delta \right)
\end{equation*}%
for any positive $\delta $\ $\leq \gamma $ such that $\gamma +\delta \leq
\pi $ $.$
\end{lemma}

\begin{proof}
Easy calculation gives%
\begin{eqnarray*}
\left\Vert \Psi _{\cdot }f\left( \delta ,\gamma \right) _{1}\right\Vert
_{L^{p}} &\ll &\frac{1}{\gamma }\int_{\gamma }^{\gamma +\delta }\omega
_{L^{p}}f\left( t\right) dt\ll \frac{1}{\gamma }\int_{\gamma }^{\gamma
+\delta }\omega _{L^{p}}f\left( \gamma +\delta \right) dt \\
&\ll &\frac{\omega _{L^{p}}f\left( \gamma \right) }{\gamma }\int_{\gamma
}^{\gamma +\delta }dt=\delta \frac{\omega _{L^{p}}f\left( \gamma \right) }{%
\gamma }\ll \delta \frac{\omega _{L^{p}}f\left( \delta \right) }{\delta }
\end{eqnarray*}%
and our Lemma is proved.
\end{proof}
\end{proof}

\section{Proofs of the results}

We only prove Theorems 1, 3 and \ 5 because in the remain proofs we have to
use Lemma 5 and Lemma 7 instead of Lemmas 3, 4 and 6.

\subsection{Proof of Theorem 1}

Let 
\begin{eqnarray*}
H_{k_{0},k_{r}}^{q}\left( x\right) &=&\left\{ \frac{1}{r+1}\sum_{\nu
=0}^{r}\left\vert \frac{1}{\pi }\int_{0}^{\pi }\varphi _{x}\left( t\right)
D_{k_{\nu }}\left( t\right) dt\right\vert ^{q}\right\} ^{1/q} \\
&\leq &A_{r}+B_{r}+C_{r},
\end{eqnarray*}%
where%
\begin{equation*}
A_{r}=\left\{ \frac{1}{r+1}\sum_{\nu =0}^{r}\left\vert \frac{1}{\pi }%
\int_{0}^{2\delta }\varphi _{x}\left( t\right) D_{k_{\nu }}\left( t\right)
dt\right\vert ^{q}\right\} ^{1/q},
\end{equation*}%
\begin{equation*}
B_{r}=\left\{ \frac{1}{r+1}\sum_{\nu =0}^{r}\left\vert \frac{1}{\pi }%
\int_{2\delta }^{2\gamma }\varphi _{x}\left( t\right) D_{k_{\nu }}\left(
t\right) dt\right\vert ^{q}\right\} ^{1/q},
\end{equation*}%
\begin{equation*}
C_{r}=\left\{ \frac{1}{r+1}\sum_{\nu =0}^{r}\left\vert \frac{1}{\pi }%
\int_{2\gamma }^{\pi }\varphi _{x}\left( t\right) D_{k_{\nu }}\left(
t\right) dt\right\vert ^{q}\right\} ^{1/q},
\end{equation*}%
with \ $D_{k_{\nu }}\left( t\right) =\frac{\sin \left( \left( k_{\nu }+\frac{%
1}{2}\right) t\right) }{2\sin \frac{t}{2}}$ , $\delta =\delta _{\nu }$ and $%
\gamma =\gamma _{r}^{2}/\delta _{\nu },\ $putting\ $\ \delta _{\nu }=\frac{%
\pi }{k_{\nu }+1/2}$\ , \ $\gamma _{r}=\frac{\pi }{r+1/2}$. In the case $%
\gamma \geq \pi /2$ we will have $C_{r}\equiv 0$. At the begin%
\begin{eqnarray*}
A_{r} &\leq &\left\{ \frac{1}{r+1}\sum_{\nu =0}^{r}\left[ \frac{k_{\nu }+1}{%
\pi }\int_{0}^{2\delta }\left\vert \varphi _{x}\left( t\right) \right\vert dt%
\right] ^{q}\right\} ^{1/q} \\
&\leq &\left\{ \frac{1}{r+1}\sum_{\nu =0}^{r}\left[ 4\frac{k_{\nu }+1/2}{%
2\pi }\int_{0}^{2\delta _{\nu }}\left\vert \varphi _{x}\left( t\right)
\right\vert dt\right] ^{q}\right\} ^{1/q} \\
&\leq &\left\{ \frac{1}{r+1}\sum_{\nu =0}^{r}\left[ 4w_{x}f(2\delta _{\nu
})_{1}\right] ^{q}\right\} ^{1/q} \\
&\leq &4w_{x}(2\delta _{0})\leq 8w_{x}(\delta _{0}).
\end{eqnarray*}

The terms \ $B_{k_{r}}$ and \ $C_{k_{r}}$ we estimate by the Totik method 
\cite{12}.and its modification from \cite{WL} We divide the term \ $B_{r}$
into the two parts%
\begin{eqnarray*}
B_{r} &=&\left\{ \frac{1}{r+1}\sum_{\nu =0}^{r}\left\vert \frac{1}{\pi }%
\int_{2\delta }^{2\gamma }\varphi _{x}\left( t\right) D_{k_{\nu }}\left(
t\right) dt\right\vert ^{q}\right\} ^{1/q} \\
&\leq &\left\{ \frac{1}{r+1}\left( \sum_{\nu =0}^{\nu _{0}-1}+\sum_{\nu =\nu
_{0}}^{r}\right) \left\vert \frac{1}{\pi }\int_{2\delta }^{2\gamma }\varphi
_{x}\left( t\right) D_{k_{\nu }}\left( t\right) dt\right\vert ^{q}\right\}
^{1/q}
\end{eqnarray*}%
\begin{eqnarray*}
&\leq &\left\{ \frac{1}{r+1}\sum_{\nu =0}^{\nu _{0}-1}\left\vert \frac{1}{%
\pi }\int_{2\gamma }^{2\delta }\varphi _{x}\left( t\right) D_{k_{\nu
}}\left( t\right) dt\right\vert ^{q}\right\} ^{1/q} \\
&&+\left\{ \frac{1}{r+1}\sum_{\nu =\nu _{0}}^{r}\left\vert \frac{1}{\pi }%
\int_{2\delta }^{2\gamma }\varphi _{x}\left( t\right) D_{k_{\nu }}\left(
t\right) dt\right\vert ^{q}\right\} ^{1/q}
\end{eqnarray*}%
\begin{eqnarray*}
&\leq &\left\{ \frac{1}{r+1}\sum_{\nu =0}^{\nu _{0}-1}\left[ \frac{k_{\nu }+1%
}{\pi }\int_{2\gamma }^{2\delta }\left\vert \varphi _{x}\left( t\right)
\right\vert dt\right] ^{q}\right\} ^{1/q}+B_{r,\nu _{0}} \\
&\leq &\left\{ \frac{1}{r+1}\sum_{\nu =0}^{r}\left[ \frac{4}{2\delta _{\nu }}%
\int_{0}^{2\delta _{\nu }}\left\vert \varphi _{x}\left( t\right) \right\vert
dt\right] ^{q}\right\} ^{1/q}+B_{r,\nu _{0}}
\end{eqnarray*}%
\begin{eqnarray*}
&\leq &\left\{ \frac{1}{r+1}\sum_{\nu =0}^{r}\left[ 4w_{x}f(2\delta _{\nu
})_{1}\right] ^{q}\right\} ^{1/q}+B_{r,\nu _{0}} \\
&\leq &8w_{x}(\delta _{0})+B_{r,\nu _{0}}\text{ ,}
\end{eqnarray*}%
where the index \ $\nu _{0}$\ is such that \ $k_{\nu _{0}-1}<r\leq k_{\nu
_{0}}$ $\left( \delta _{\nu _{0}}\leq \gamma _{r}<\delta _{\nu _{0}-1}\text{
with }k_{-1}=0\right) .$\ Next the term \ $B_{r,\nu _{0}}.$ we divide into
the three parts.

\begin{eqnarray*}
&&B_{r,\nu _{0}}. \\
&=&\left\{ \frac{1}{r+1}\sum_{\nu =\nu _{0}}^{r}\left\vert \frac{1}{\pi }%
\int_{2\delta _{\nu }}^{2\gamma }\varphi _{x}\left( t\right) D_{k_{\nu
}}\left( t\right) dt\right\vert ^{q}\right\} ^{1/q} \\
&=&\frac{1}{2}\left\{ \frac{1}{r+1}\sum_{\nu =\nu _{0}}^{r}\left\vert \frac{1%
}{\pi }\left( \int_{2\delta _{\nu }}^{2\gamma }+\int_{\delta _{\nu
}}^{2\gamma -\delta _{\nu }}+\int_{2\gamma -\delta _{\nu }}^{2\gamma
}-\int_{\delta _{\nu }}^{2\delta _{\nu }}\right) \varphi _{x}\left( t\right)
D_{k_{\nu }}\left( t\right) dt\right\vert ^{q}\right\} ^{1/q} \\
&\leq &B_{r,\nu _{0}}^{1}+B_{r,\nu _{0}}^{2}+B_{r,\nu _{0}}^{3},
\end{eqnarray*}%
where the first term%
\begin{eqnarray*}
&&B_{r,\nu _{0}}^{1} \\
&=&\frac{1}{2}\left\{ \frac{1}{r+1}\sum_{\nu =\nu _{0}}^{r}\left\vert \frac{1%
}{\pi }\left( \int_{2\delta _{\nu }}^{2\gamma }+\int_{\delta _{\nu
}}^{2\gamma -\delta _{\nu }}\right) \varphi _{x}\left( t\right) D_{k_{\nu
}}\left( t\right) dt\right\vert ^{q}\right\} ^{1/q} \\
&=&\frac{1}{2}\left\{ \frac{1}{r+1}\sum_{\nu =\nu _{0}}^{r}\left\vert \frac{1%
}{\pi }\int_{2\delta _{\nu }}^{2\gamma }\left[ \varphi _{x}\left( t\right)
D_{k_{\nu }}\left( t\right) +\varphi _{x}\left( t-\delta _{\nu }\right)
D_{k_{\nu }}\left( t-\delta _{\nu }\right) \right] dt\right\vert
^{q}\right\} ^{1/q}
\end{eqnarray*}%
\begin{eqnarray*}
&\leq &\frac{1}{2}\left\{ \frac{1}{r+1}\sum_{\nu =\nu _{0}}^{r}\left\vert 
\frac{1}{\pi }\int_{2\delta _{\nu }}^{2\gamma }\left( \varphi _{x}\left(
t\right) -\varphi _{x}\left( t-\delta _{\nu }\right) \right) D_{k_{\nu
}}\left( t\right) dt\right\vert ^{q}\right\} ^{1/q} \\
&&+\frac{1}{2}\left\{ \frac{1}{r+1}\sum_{\nu =\nu _{0}}^{r}\left\vert \frac{1%
}{\pi }\int_{2\delta _{\nu }}^{2\gamma }\varphi _{x}\left( t-\delta _{\nu
}\right) \left( D_{k_{\nu }}\left( t\right) +D_{k_{\nu }}\left( t-\delta
_{\nu }\right) \right) dt\right\vert ^{q}\right\} ^{1/q}.
\end{eqnarray*}%
Using the partial integration we obtain 
\begin{eqnarray*}
&&B_{r,\nu _{0}}^{1} \\
&\leq &\frac{1}{2}\left\{ \frac{1}{r+1}\sum_{\nu =\nu _{0}}^{r}\left\vert 
\frac{1}{\pi }\int_{2\delta _{\nu }}^{2\gamma }\frac{d}{dt}\left[
\int_{0}^{t}\left( \varphi _{x}\left( u\right) -\varphi _{x}\left( u-\delta
_{\nu }\right) \right) \sin \frac{\left( 2k_{\nu }+1\right) u}{2}du\right] 
\frac{1}{2\sin \frac{t}{2}}dt\right\vert ^{q}\right\} ^{1/q} \\
&&+\frac{1}{2}\left\{ \frac{1}{r+1}\sum_{\nu =\nu _{0}}^{r}\left\vert \frac{1%
}{\pi }\int_{2\delta _{\nu }}^{2\gamma }\varphi _{x}\left( t-\delta _{\nu
}\right) \left( \frac{1}{2\sin \frac{t}{2}}-\frac{1}{2\sin \frac{t-\delta
_{\nu }}{2}}\right) \sin \frac{\left( 2k_{\nu }+1\right) t}{2}dt\right\vert
^{q}\right\} ^{1/q}
\end{eqnarray*}%
\begin{eqnarray*}
&\ll &\left\{ \frac{1}{r+1}\sum_{\nu =\nu _{0}}^{r}\left\vert \frac{1}{\pi }%
\left[ \int_{0}^{t}\left( \varphi _{x}\left( u\right) -\varphi _{x}\left(
u-\delta _{\nu }\right) \right) \sin \frac{\left( 2k_{\nu }+1\right) u}{2}du%
\frac{1}{2\sin \frac{t}{2}}\right] _{t=2\delta _{\nu }}^{2\gamma }\right.
\right. \\
&&+\left. \left. \frac{1}{\pi }\int_{2\delta _{\nu }}^{2\gamma _{r}}\left[
\int_{0}^{t}\left( \varphi _{x}\left( u\right) -\varphi _{x}\left( u-\delta
_{\nu }\right) \right) \sin \frac{\left( 2k_{\nu }+1\right) u}{2}du\right] 
\frac{\cos \frac{t}{2}}{\left( 2\sin \frac{t}{2}\right) ^{2}}dt\right\vert
^{q}\right\} ^{1/q} \\
&&+\left\{ \frac{1}{r+1}\sum_{\nu =\nu _{0}}^{r}\left\vert \delta _{\nu }%
\frac{1}{\pi }\int_{2\delta _{\nu }}^{2\gamma _{r}}\frac{\left\vert \varphi
_{x}\left( t-\delta _{\nu }\right) \right\vert }{t^{2}}dt\right\vert
^{q}\right\} ^{1/q}
\end{eqnarray*}%
\begin{eqnarray*}
&\ll &\left\{ \frac{1}{r+1}\sum_{\nu =\nu _{0}}^{r}\left[ \left\vert \frac{1%
}{\pi }\int_{0}^{2\gamma }\left( \varphi _{x}\left( u\right) -\varphi
_{x}\left( u-\delta _{\nu }\right) \right) \sin \frac{\left( 2k_{\nu
}+1\right) u}{2}du\frac{1}{2\sin \frac{2\gamma }{2}}\right\vert \right.
\right. \\
&&+\left\vert \frac{1}{\pi }\int_{0}^{2\delta _{\nu }}\left( \varphi
_{x}\left( u\right) -\varphi _{x}\left( u-\delta _{\nu }\right) \right) \sin 
\frac{\left( 2k_{\nu }+1\right) u}{2}du\frac{1}{2\sin \frac{2\delta _{\nu }}{%
2}}\right\vert \\
&&+\left. \left. \frac{1}{\pi }\int_{2\delta _{\nu }}^{2\gamma }\left[
\int_{0}^{t}\left\vert \left( \varphi _{x}\left( u\right) -\varphi
_{x}\left( u-\delta _{\nu }\right) \right) \sin \frac{\left( 2k_{\nu
}+1\right) u}{2}\right\vert du\right] \frac{\pi ^{2}}{\left( 2t\right) ^{2}}%
dt\right] ^{q}\right\} ^{1/q} \\
&&+\left\{ \frac{1}{r+1}\sum_{\nu =\nu _{0}}^{r}\left[ \delta _{\nu }\frac{1%
}{\pi }\int_{\delta _{\nu }}^{2\gamma -\delta _{\nu }}\frac{\left\vert
\varphi _{x}\left( t\right) \right\vert }{\left( t+\delta _{\nu }\right) ^{2}%
}dt\right] ^{q}\right\} ^{1/q},
\end{eqnarray*}%
and applying Lemmas 1,2,3 we have%
\begin{eqnarray*}
&&B_{r,\nu _{0}}^{1} \\
&\ll &\left\{ \frac{1}{r+1}\sum_{\nu =\nu _{0}}^{r}\left[ \frac{1}{8\gamma }%
\int_{0}^{2\gamma }\left\vert \varphi _{x}\left( u\right) -\varphi
_{x}\left( u-\delta _{\nu }\right) \right\vert du\right. \right. \\
&&+\left. \left. \frac{1}{4\delta _{\nu }}\int_{0}^{2\delta _{\nu
}}\left\vert \varphi _{x}\left( u\right) -\varphi _{x}\left( u-\delta _{\nu
}\right) \right\vert du\right. \right. \\
&&+\left. \left. \frac{\pi }{8}\int_{2\delta _{\nu }}^{2\gamma }\left( \frac{%
1}{t^{2}}\int_{0}^{t}\left\vert \varphi _{x}\left( u\right) -\varphi
_{x}\left( u-\delta _{\nu }\right) \right\vert du\right) dt\right]
^{q}\right\} ^{1/q} \\
&&+\left\{ \frac{1}{r+1}\sum_{\nu =\nu _{0}}^{r}\left[ \delta _{\nu
}\int_{\delta _{\nu }}^{\pi }\frac{\left\vert \varphi _{x}\left( t\right)
\right\vert }{t^{2}}dt\right] ^{q}\right\} ^{1/q}
\end{eqnarray*}%
\begin{eqnarray*}
&\ll &w_{x}\left( \delta _{0}\right) \\
&&+\frac{\pi }{8}\left\{ \frac{1}{r+1}\sum_{\nu =\nu _{0}}^{r}\left[
\int_{2\delta _{\nu }}^{2\gamma }\frac{1}{t}w_{x}\left( \delta _{\nu
}\right) dt\right] ^{q}\right\} ^{1/q} \\
&&+\left\{ \frac{1}{r+1}\sum_{\nu =\nu _{0}}^{r}\left[ \delta _{\nu
}\sum_{\mu =0}^{k_{\nu }}w_{x}f\left( \frac{\pi }{\mu +1}\right) _{1}\right]
^{q}\right\} ^{1/q}
\end{eqnarray*}%
\begin{eqnarray*}
&\ll &w_{x}\left( \delta _{0}\right) +Kw_{x}\left( \delta _{0}\right) \log 
\frac{\gamma }{\delta _{r}}+K\delta _{0}\sum_{\mu =0}^{k_{0}}w_{x}\left( 
\frac{\pi }{\mu +1}\right) \\
&\leq &Kw_{x}\left( \delta _{0}\right) \left( 1+\log \frac{k_{r}+1/2}{r+1/2}%
\right) .
\end{eqnarray*}%
Consequently, by Lemma 4,

\begin{eqnarray*}
B_{r,\nu _{0}}^{2} &=&\frac{1}{2}\left\{ \frac{1}{r+1}\sum_{\nu =\nu
_{0}}^{r}\left\vert \frac{1}{\pi }\int_{2\gamma -\delta _{\nu }}^{2\gamma
}\varphi _{x}\left( t\right) D_{k_{\nu }}\left( t\right) dt\right\vert
^{q}\right\} ^{1/q} \\
&\leq &\frac{1}{2}\left\{ \frac{1}{r+1}\sum_{\nu =\nu _{0}}^{r}\left\vert 
\frac{1}{\pi }\int_{2\gamma -\delta _{\nu }}^{2\gamma }\left\vert \varphi
_{x}\left( t\right) \right\vert \frac{\pi }{2t}dt\right\vert ^{q}\right\}
^{1/q}
\end{eqnarray*}%
\begin{eqnarray*}
&\leq &\frac{1}{2}\left\{ \frac{1}{r+1}\sum_{\nu =\nu _{0}}^{r}\left\vert 
\frac{1}{\pi }\int_{2\gamma -\delta _{\nu }}^{2\gamma }\left\vert \varphi
_{x}\left( t\right) \right\vert \frac{\pi }{2t}dt\right\vert ^{q}\right\}
^{1/q} \\
&\leq &\frac{1}{4}\left\{ \frac{1}{r+1}\sum_{\nu =\nu _{0}}^{r}\left\vert
\int_{2\gamma -\delta _{\nu }}^{2\gamma }\frac{d}{dt}\left(
\int_{0}^{t}\left\vert \varphi _{x}\left( u\right) \right\vert du\right) 
\frac{dt}{t}\right\vert ^{q}\right\} ^{1/q}
\end{eqnarray*}%
\begin{eqnarray*}
&\leq &\frac{1}{4}\left\{ \frac{1}{r+1}\sum_{\nu =\nu _{0}}^{r}\left\vert %
\left[ \frac{1}{t}\int_{0}^{t}\left\vert \varphi _{x}\left( u\right)
\right\vert du\right] _{t=2\gamma -\delta _{\nu }}^{t=2\gamma
}+\int_{2\gamma -\delta _{\nu }}^{2\gamma }\frac{w_{x}\left( t\right) }{t}%
dt\right\vert ^{q}\right\} ^{1/q} \\
&\ll &\left\{ \frac{1}{r+1}\sum_{\nu =\nu _{0}}^{r}\left\vert \frac{1}{%
2\gamma }\int_{0}^{2\gamma }\left\vert \varphi _{x}\left( u\right)
\right\vert du-\frac{1}{2\gamma -\delta _{\nu }}\int_{0}^{2\gamma -\delta
_{\nu }}\left\vert \varphi _{x}\left( u\right) \right\vert du\right. \right.
\\
&&+\left. \left. \frac{w_{x}\left( 2\gamma -\delta _{\nu }\right) }{2\gamma
-\delta _{\nu }}\int_{2\gamma -\delta _{\nu }}^{2\gamma }dt\right\vert
^{q}\right\} ^{1/q}
\end{eqnarray*}%
\begin{eqnarray*}
&\ll &\left\{ \frac{1}{r+1}\sum_{\nu =\nu _{0}}^{r}\left\vert \frac{1}{%
2\gamma -\delta _{\nu }}\int_{0}^{2\gamma }\left[ \left\vert \varphi
_{x}\left( u\right) \right\vert -\left\vert \varphi _{x}\left( u-\delta
_{\nu }\right) \right\vert \right] du\right. \right. \\
&&+\left. \left. \frac{1}{2\gamma -\delta _{\nu }}\int_{0}^{\delta _{\nu
}}\left\vert \varphi _{x}\left( u-\delta _{\nu }\right) \right\vert du+\frac{%
w_{x}\left( \delta _{\nu }\right) }{\delta _{\nu }}\delta _{\nu }\right\vert
^{q}\right\} ^{1/q}
\end{eqnarray*}%
\begin{eqnarray*}
&\ll &\left\{ \frac{1}{r+1}\sum_{\nu =\nu _{0}}^{r}\left\vert \frac{1}{%
\gamma }\int_{0}^{2\gamma }\left[ \left\vert \varphi _{x}\left( u\right)
-\varphi _{x}\left( u-\delta _{\nu }\right) \right\vert \right] du\right.
\right. \\
&&+\left. \left. \frac{1}{\delta _{\nu }}\int_{-\delta _{\nu
}}^{0}\left\vert \varphi _{x}\left( u\right) \right\vert du+w_{x}\left(
\delta _{\nu }\right) \right\vert ^{q}\right\} ^{1/q} \\
&\ll &w_{x}\left( \delta _{0}\right)
\end{eqnarray*}%
and%
\begin{eqnarray*}
B_{r,\nu _{0}}^{3} &=&\frac{1}{2}\left\{ \frac{1}{r+1}\sum_{\nu =\nu
_{0}}^{r}\left\vert \frac{1}{\pi }\int_{\delta _{\nu }}^{2\delta _{\nu
}}\varphi _{x}\left( t\right) D_{k_{\nu }}\left( t\right) dt\right\vert
^{q}\right\} ^{1/q} \\
&\leq &\frac{1}{2}\left\{ \frac{1}{r+1}\sum_{\nu =\nu _{0}}^{r}\left\vert 
\frac{1}{\pi }\int_{\delta _{\nu }}^{2\delta _{\nu }}\left\vert \varphi
_{x}\left( t\right) \right\vert \frac{\pi }{2t}dt\right\vert ^{q}\right\}
^{1/q} \\
&\leq &\frac{1}{2}\left\{ \frac{1}{r+1}\sum_{\nu =\nu _{0}}^{r}\left[
w_{x}f\left( 2\delta _{\nu }\right) \right] ^{q}\right\} ^{1/q}\ll
w_{x}\left( \delta _{0}\right) .
\end{eqnarray*}%
Thus%
\begin{equation*}
B_{r}\ll w_{x}\left( \delta _{0}\right) \left( 1+\log \frac{k_{r}+1}{r+1/2}%
\right) .
\end{equation*}

Finally we estimate the term \ $C_{r}$ dividing it into the two parts. 
\begin{eqnarray*}
&&C_{r} \\
&=&\left\{ \frac{1}{r+1}\sum_{\nu =0}^{r}\left\vert \frac{1}{\pi }%
\int_{2\gamma }^{\pi }\varphi _{x}\left( t\right) \left( 2\sin \frac{t}{2}%
\right) ^{-1}\sin \left( \left( k_{\nu }+\frac{1}{2}\right) t\right)
dt\right\vert ^{q}\right\} ^{1/q} \\
&\leq &\left\{ \frac{1}{r+1}\sum_{\nu =0}^{r}\left\vert \frac{1}{\pi }%
\int_{2\gamma }^{\pi }\left[ \frac{\Phi _{x}f\left( \delta _{0},t\right)
-\varphi _{x}\left( t\right) }{2\sin \frac{t}{2}}\right] \sin \left( \left(
k_{\nu }+\frac{1}{2}\right) t\right) dt\right\vert ^{q}\right\} ^{1/q} \\
&&+\left\{ \frac{1}{r+1}\sum_{\nu =0}^{r}\left\vert \frac{1}{\pi }%
\int_{2\gamma }^{\pi }\frac{\Phi _{x}f\left( \delta _{0},t\right) }{2\sin 
\frac{t}{2}}\sin \left( \left( k_{\nu }+\frac{1}{2}\right) t\right)
dt\right\vert ^{q}\right\} ^{1/q} \\
&=&C_{r}^{1}+C_{r}^{2}.
\end{eqnarray*}

Integrating by\ parts and applying Lemma 4 we obtain%
\begin{eqnarray*}
C_{r}^{1} &\leq &\frac{1}{\delta _{0}}\int_{0}^{\delta _{0}}\left[
\int_{2\gamma }^{\pi }\frac{\left\vert \varphi _{x}\left( u+t\right)
-\varphi _{x}\left( t\right) \right\vert }{t}dt\right] du \\
&=&\frac{1}{\delta _{0}}\int_{0}^{\delta _{0}}\left[ \int_{2\gamma }^{\pi }%
\frac{1}{t}\frac{d}{dt}\left( \int_{0}^{t}\left\vert \varphi _{x}\left(
u+v\right) -\varphi _{x}\left( v\right) \right\vert dv\right) dt\right] du
\end{eqnarray*}%
\begin{eqnarray*}
&=&\frac{1}{\delta _{0}}\int_{0}^{\delta _{0}}\left\{ \left[ \frac{1}{t}%
\int_{0}^{t}\left\vert \varphi _{x}\left( u+v\right) -\varphi _{x}\left(
v\right) \right\vert dv\right] _{t=2\gamma }^{\pi }\right. \\
&&+\left. \int_{2\gamma }^{\pi }\left( \frac{1}{t^{2}}\int_{0}^{t}\left\vert
\varphi _{x}\left( u+v\right) -\varphi _{x}\left( v\right) \right\vert
dv\right) dt\right\} du
\end{eqnarray*}%
\begin{eqnarray*}
&\leq &\frac{1}{\delta _{0}}\int_{0}^{\delta _{0}}w_{x}\left( u\right) du+%
\frac{1}{\delta _{0}}\int_{0}^{\delta _{0}}\left\{ \int_{2\gamma }^{\pi }%
\frac{1}{t}w_{x}\left( u\right) dt\right\} du \\
&\leq &w_{x}\left( \delta _{0}\right) +w_{x}\left( \delta _{0}\right)
\int_{2\gamma }^{\pi }\frac{1}{t}dt \\
&\leq &w_{x}\left( \delta _{0}\right) \left( 1+\log \pi -\log \gamma \right)
\\
&\leq &w_{x}\left( \delta _{0}\right) \left( 1+\log \frac{k_{r}+1}{r+1}%
\right)
\end{eqnarray*}

and additionally by Lemma 6%
\begin{eqnarray*}
&&\ C_{r}^{2} \\
&=&\frac{1}{2\left( r+1\right) ^{1/q}}\left\{ \sum_{\nu =0}^{r}\left\vert 
\frac{1}{\pi }\int_{2\gamma }^{\pi }\frac{\Phi _{x}f\left( \delta
_{0},t\right) }{2\sin \frac{t}{2}}\frac{d}{dt}\left( \frac{\cos \left(
\left( k_{\nu }+\frac{1}{2}\right) t\right) }{k_{\nu }+\frac{1}{2}}\right)
dt\right\vert ^{q}\right\} ^{1/q}
\end{eqnarray*}%
\begin{eqnarray*}
&=&\frac{1}{2\pi \left( r+1\right) ^{1/q}}\left\{ \sum_{\nu
=0}^{r}\left\vert \left[ \frac{\Phi _{x}f\left( \delta _{0},t\right) }{2\sin 
\frac{t}{2}}\frac{\cos \left( \left( k_{\nu }+\frac{1}{2}\right) t\right) }{%
k_{\nu }+\frac{1}{2}}\right] _{2\gamma }^{\pi }\right. \right. \\
&&\left. \left. -\int_{2\gamma }^{\pi }\frac{d}{dt}\left( \frac{\Phi
_{x}f\left( \delta _{0},t\right) }{2\sin \frac{t}{2}}\right) \frac{\cos
\left( \left( k_{\nu }+\frac{1}{2}\right) t\right) }{k_{\nu }+\frac{1}{2}}%
dt\right\vert ^{q}\right\} ^{1/q}
\end{eqnarray*}%
\begin{eqnarray*}
&\leq &\frac{1}{2\pi \left( r+1\right) ^{1/q}}\left\{ \sum_{\nu =0}^{r}\left[
\left\vert \frac{\Phi _{x}f\left( \delta _{0},2\gamma \right) }{2\sin \gamma 
}\frac{\cos \left( \left( k_{\nu }+\frac{1}{2}\right) 2\gamma \right) }{%
k_{\nu }+\frac{1}{2}}\right\vert \right. \right. \\
&&\left. \left. +\left\vert \int_{2\gamma }^{\pi }\frac{d}{dt}\left( \frac{%
\Phi _{x}f\left( \delta _{0},t\right) }{2\sin \frac{t}{2}}\right) \frac{\cos
\left( \left( k_{\nu }+\frac{1}{2}\right) t\right) }{k_{\nu }+\frac{1}{2}}%
dt\right\vert \right] ^{q}\right\} ^{1/q}
\end{eqnarray*}

\begin{eqnarray*}
&\leq &\frac{\left\vert \Phi _{x}f\left( \delta _{0},2\gamma \right)
\right\vert }{\gamma \left( k_{0}+1\right) }+\frac{1}{k_{0}+1}\int_{2\gamma
}^{\pi }\left\vert \frac{d}{dt}\left( \frac{\Phi _{x}f\left( \delta
_{0},t\right) }{2\sin \frac{t}{2}}\right) \right\vert dt \\
&\leq &\frac{1}{\gamma \left( k_{0}+1\right) }\frac{1}{\delta _{0}}%
\int_{0}^{\delta _{0}}\left\vert \varphi _{x}\left( u+2\gamma \right)
\right\vert du+\delta _{0}\int_{2\gamma }^{\pi }\frac{\left\vert \varphi
_{x}\left( \delta _{0}+t\right) -\varphi _{x}\left( t\right) \right\vert }{%
\delta _{0}t}dt \\
&&+\frac{1}{\delta _{0}}\int_{0}^{\delta _{0}}\left( \delta
_{0}\int_{2\gamma }^{\pi }\frac{\left\vert \varphi _{x}\left( u+t\right)
\right\vert }{t^{2}}dt\right) du
\end{eqnarray*}%
\begin{eqnarray*}
&\leq &\left\vert \Psi _{x}f\left( \delta _{0},2\gamma \right) \right\vert
+\int_{2\gamma }^{\pi }\frac{1}{t}\frac{d}{dt}\left( \int_{0}^{t}\left\vert
\varphi _{x}\left( \delta _{0}+u\right) -\varphi _{x}\left( u\right)
\right\vert du\right) dt \\
&&+\frac{1}{\delta _{0}}\int_{0}^{\delta _{0}}\left( \delta
_{0}\int_{2\gamma }^{\pi }\frac{\left\vert \varphi _{x}\left( u+t\right)
\right\vert }{t^{2}}dt\right) du \\
&\leq &G_{x}f\left( \delta _{0}\right) _{1,s}+\left[ \frac{1}{t}%
\int_{0}^{t}\left\vert \varphi _{x}\left( \delta _{0}+u\right) -\varphi
_{x}\left( u\right) \right\vert du\right] _{t=2\gamma }^{\pi } \\
&&+\int_{2\gamma }^{\pi }\frac{w_{x}\left( \delta _{0}\right) }{t}dt++\frac{1%
}{\delta _{0}}\int_{0}^{\delta _{0}}\left( \delta _{0}\int_{2\gamma }^{\pi }%
\frac{\left\vert \varphi _{x}\left( u+t\right) \right\vert }{t^{2}}dt\right)
du
\end{eqnarray*}%
\begin{equation*}
\leq w_{x}\left( \delta _{0}\right) \left( 1+\int_{2\gamma }^{\pi }\frac{1}{t%
}dt\right) \leq w_{x}\left( \delta _{0}\right) \left( 1+\log \frac{k_{r}+1}{%
r+1}\right) .
\end{equation*}%
Collecting \ our estimates we obtain desired estimate. $\ \blacksquare $

\subsection{Proof of Theorem 3}

If \ $w_{x}\left( \delta \right) \equiv 0$ \ then \ $f$ \ is constant and
our inequality is true. Thus we can suppose that \ \ $w_{x}\left( \delta
\right) >0$ \ \ for \ $\delta >0$.

Let denote by 
\begin{eqnarray*}
\Delta _{\mu } &=&\left\{ \nu :\left\vert S_{\nu }f\left( x\right) -f\left(
x\right) \right\vert \geq \mu w_{x}\left( u\right) \right\} \\
\Gamma _{\mu } &=&\left\{ \nu :\left( \mu -1\right) G_{x}^{\circ }f\left(
u\right) _{1,s}\leq \left\vert S_{\nu }f\left( x\right) -f\left( x\right)
\right\vert \leq \mu w_{x}\left( u\right) \right\} \\
\Theta &=&\left\{ \mu :\Gamma _{\mu }\neq \varnothing \right\}
\end{eqnarray*}%
the sets of integers \ $\nu \in \left[ N_{m-2}+1,N_{m}\right] $ \ and \ $\mu
,$ where \ $u=$\ $\frac{\pi }{N_{m-2}+1}$., \ then%
\begin{eqnarray*}
H_{m}^{\lambda \varphi }f\left( x\right) &\leq &\frac{1}{N_{m}+1}\sum_{\mu
\in \Theta }\sum_{\nu \in \Gamma _{\mu }}\varphi \left( \left\vert S_{\nu
}f\left( x\right) -f\left( x\right) \right\vert \right) \\
&\leq &\frac{1}{N_{m}+1}\sum_{\mu \in \Theta }\sum_{\nu \in \Gamma _{\mu
}}\varphi \left( \mu w_{x}\left( u\right) \right) \\
&=&\frac{1}{N_{m}+1}\sum_{\mu \in \Theta }\left\vert \Gamma _{\mu
}\right\vert \varphi \left( \mu w_{x}\left( u\right) \right) \\
&\leq &.\frac{1}{N_{m}+1}\sum_{\mu \in \Theta }\left\vert \Delta _{\mu
-1}\right\vert \varphi \left( \mu w_{x}\left( u\right) \right) .
\end{eqnarray*}%
Using Theorem 1 we can compute that \ \ $\left\vert \Delta _{\mu
-1}\right\vert \leq N_{m}\exp \left( -\frac{\mu -1}{K}\right) $,\ whence%
\begin{eqnarray*}
H_{m}^{\lambda \varphi }f\left( x\right) &\leq &\frac{1}{N_{m}+1}\sum_{\mu
\in \Theta }N_{m}\exp \left( -\frac{\mu -1}{K}\right) \varphi \left( \mu
w_{x}\left( u\right) \right) \\
&\ll &\sum_{\mu \in \Theta }\exp \left( -\frac{\mu }{K}\right) \varphi
\left( \mu w_{x}\left( u\right) \right) .
\end{eqnarray*}%
Since $\varphi \in \Phi $ , \ we have%
\begin{eqnarray*}
H_{m}^{\lambda \varphi }f\left( x\right) &\ll &\varphi \left( w_{x}\left(
u\right) \right) \\
&&+\left( \sum_{n=0}^{n_{0}}+\sum_{n=n_{0}+1}^{\infty }\right) \sum_{\mu
=2^{n}+1}^{2^{n+1}}\exp \left( -\frac{\mu }{K}\right) \varphi \left( \mu
w_{x}\left( u\right) \right) \\
&\ll &\varphi \left( w_{x}\left( u\right) \right) +\sum_{n=0}^{\infty
}\sum_{\mu =2^{n}+1}^{2^{n+1}}\exp \left( -\frac{2^{n}}{K}\right) \varphi
\left( 2^{n+1}w_{x}\left( u\right) \right) \\
&\ll &\varphi \left( w_{x}\left( u\right) \right) +\sum_{n=0}^{\infty
}2^{n}\exp \left( -\frac{2^{n}}{K}\right) \varphi \left( 2^{n}w_{x}\left(
u\right) \right) \\
&\ll &\varphi \left( w_{x}\left( u\right) \right)
+\sum_{n=0}^{n_{0}}2^{n}\exp \left( -\frac{2^{n}}{K}\right) \varphi \left(
2^{n}w_{x}\left( u\right) \right) \\
&&+\sum_{n=n_{0}+1}^{\infty }2^{n}\exp \left( -\frac{2^{n}}{K}\right)
\varphi \left( 2^{n}w_{x}\left( u\right) \right) \\
&\ll &\varphi \left( w_{x}\left( u\right) \right)
\end{eqnarray*}%
with some \ $n_{0}$\ , analogously as in.\cite{12} p.108, \ and therefore
our proof is complete.

$\blacksquare $

\subsection{Proof of Theorem 5}

We start with the obvious inequality%
\begin{equation*}
H_{\cdot }^{\lambda \varphi }f\left( x\right) \ll \sum_{m=2}^{\infty
}\sum_{\nu =N_{m-2}+1}^{N_{m}}\lambda _{\nu }\varphi \left( \left\vert
S_{\nu }f\left( x\right) -f\left( x\right) \right\vert \right) .
\end{equation*}%
Using the H\"{o}lder inequality we obtain%
\begin{equation*}
H_{\cdot }^{\lambda \varphi }f\left( x\right) \ll \sum_{m=1}^{\infty
}\left\{ \sum_{\nu =N_{m-2}+1}^{N_{m}}\left( \lambda _{\nu }\right)
^{s}\right\} ^{1/s}\left\{ \sum_{\nu =N_{m-2}+1}^{N_{m}}\varphi ^{q}\left(
\left\vert S_{\nu }f\left( x\right) -f\left( x\right) \right\vert \right)
\right\} ^{1/q}
\end{equation*}%
with \ \ $\frac{1}{s}+\frac{1}{q}=1$ \ $\left( s>1\right) ,$ \ and by the
assumption \ $\left( \lambda _{\nu }\right) \in \Lambda _{s}\left(
N_{m}\right) ,$we have \ 
\begin{equation*}
H_{\cdot }^{\lambda \varphi }f\left( x\right) \ll \sum_{m=1}^{\infty
}\sum_{\nu =N_{m-2}+1}^{N_{m}}\lambda _{\nu }\left\{ \frac{1}{N_{m}}%
\sum_{\nu =N_{m-2}+1}^{N_{m}}\varphi ^{q}\left( \left\vert S_{\nu }f\left(
x\right) -f\left( x\right) \right\vert \right) \right\} ^{1/q}.
\end{equation*}%
The second assumption \ $\varphi \in \Phi $ \ also implies\ that $\varphi
^{q}\in \Phi .$ and therefore, by the Theorem 3, 
\begin{eqnarray*}
H_{\cdot }^{\lambda \varphi }f\left( x\right) &\ll &\sum_{m=1}^{\infty
}\sum_{\nu =N_{m-2}+1}^{N_{m}}\lambda _{\nu }\left\{ \varphi ^{q}\left(
w_{x}\left( \frac{\pi }{N_{m-2}+2}\right) \right) \right\} ^{1/q} \\
&\ll &\sum_{m=1}^{\infty }\sum_{\nu =N_{m-2}+1}^{N_{m}}\lambda _{\nu
}\varphi \left( w_{x}\left( \frac{\pi }{N_{m-2}+2}\right) \right) .
\end{eqnarray*}

Thus our result is proved. $\ \ \blacksquare $


\begin{thebibliography}{99}
\bibitem{1} O. D. Gabisonia, On the point of strong summability of Fourier
series, Mat. Zam. 14, No 5 (1973), 615-626 ( in Russian).

\bibitem{G} O. D. Gabisonia, Points of strong summability of Fourier
series,Ukrainskij Matematicheskij Zhurnal, Vol. 44, No. 8, 1020-1031( in
Russian).

\bibitem{2} G. H. Hardy, J. E. Littlewood, Sur la s\'{e}rie de Fourier d'une
function a car\'{e}\ sommable, Comptes Rendus, Vol.28,.(1913), 1307-1309..

\bibitem{L} L. Leindler, Strong approximation by Fourier series, Akad\'{e}%
miai Kiad\'{o}, Budapest, 1985.

\bibitem{10} W. \L enski, On the strong approximation by $\left( C,\alpha
\right) $-means of Fourier series, Math Nachrichten \ 146 (1990), 207-220.

\bibitem{WL} W. \L enski, Pointwise strong and very strong\ approximation of
Fourier series, Acta Math. Hungar. 115 (2007). no.3. 215-233

\bibitem{JM} J. Marcinkiewicz, Sur la sommabilite forte de series de
Fourier, J. London Math. Soc. 14 (1939), pp.162-168.

\bibitem{11} V. Totik, On Generalization of Fej\'{e}r summation theorem,
Coll. Math. Soc. J. Bolyai 35 Functions series, operators, Budapest
(Hungary) (1980), 1185-1199.

\bibitem{12} V. Totik, Notes on Fourier series: Strong approximation, J.
Approx. Th. 43 (1985), 105-111.

\bibitem{ZA} A. Zygmund, On the convergence and summability of power series
on the circle of convergence, P.L.M.S. 47 (1941), 326-50.
\end{thebibliography}
\end{document}